\theoremstyle{plain}
\newtheorem{thm}{Theorem}
\newtheorem{prop}[thm]{Proposition}
\newtheorem{lem}[thm]{Lemma}
\theoremstyle{definition}
\newtheorem{df}{Definition}
\theoremstyle{remark}
\newtheorem{rmk}{Remark}
\newtheorem*{acks}{Acknowledgments}
\newcommand{\ie}{\textit{i.e. }}
\newcommand{\cL}{\mathcal L}
\newcommand{\p}{\mathbb{P}}
\newcommand{\R}{\mathcal{R}}
\newcommand{\NN}{\mathbb{N}}
\newcommand{\FF}{\mathbb{F}}
\newcommand{\G}{\mathrm{Gr}}
\newcommand{\OO}{\mathcal{O}}
\newcommand{\II}{\mathcal{I}}
\newcommand{\Sym}{\mathcal{S}ym}
\newcommand{\abs}[1]{\lvert#1\rvert}
\newcommand{\gen}[1]{\langle#1\rangle}
\newcommand{\Z}{{\mathbb Z}}
\newcommand{\CC}{{\mathbb C}}
\newcommand{\de}{\partial}
\DeclareMathOperator{\pic}{Pic}
\DeclareMathOperator{\coker}{coker}
\DeclareMathOperator{\Ext}{Ext}
\def\cocoa{{\hbox{\rm C\kern-.13em o\kern-.07em C\kern-.13em o\kern-.15em A}}}
\DeclareMathOperator{\PGL}{\mathbb{P}GL}
\DeclareMathOperator{\proj}{Proj}
\begin{document}

\title{Fermat hypersurfaces and Subcanonical curves}
\author{Pietro De Poi \and Francesco Zucconi}

\thanks{}

\address{
Dipartimento di Matematica e Informatica\\
Universit\`a degli Stud\^\i\ di Udine\\
Via delle Scienze, 206\\
Loc. Rizzi\\
33100 Udine\\
Italy
}

\email{pietro.depoi@dimi.uniud.it} \email{francesco.zucconi@dimi.uniud.it}
\keywords{Subcanonical curves; apolarity; rational surfaces; Waring number.}
\subjclass[2000]{Primary 14H51; Secondary 13H10, 14M05, 14N05}
\date{\today}

\begin{abstract} 
We show that any Fermat hypersurface of degree $s+2$ is apolar to a
$s$-subcanonical $(s+2)$-gonal projectively normal curve, and vice versa.

Moreover, we extend the classical Enriques-Petri Theorem to $s$-subcanonical projectively normal curves, 
proving that such a curve is $(s+2)$-gonal if and only if it is contained in a rational normal surface. 
\end{abstract}
\maketitle

\bibliographystyle{amsalpha}

\section{Introduction} 

A nice result by Macaulay \cite{Mac} is that an Artinian graded Gorenstein ring of
socle dimension $1$ and degree $s+2$ can be
realised as $A=\frac{\CC[\de_0,\dotsc,
\de_N]}{F^\perp}$, where $F\in \CC[x_0,\dotsc, x_N]$ 
is a homogeneous polynomial of degree $s+2$ and $F^\perp:=
\{ D\in \CC[\de_0,\dotsc, 
\de_N]\vert\ D(F)=0 \}$, where  $\CC[\de_0,\dotsc,
\de_{N}]$ is the polynomial ring generated by the natural
derivations over $\CC[x_0,\dotsc,
x_{N}]$.

In this paper, we 
characterise those Artinian graded Gorenstein ring of
socle degree $s+2$ such that $F$ is a Fermat hypersurface.

To state our 
main results, we consider an embedding $j\colon C\to
X\subset \p^{N+2}$ of a smooth projective curve $C$ 
such that the homogeneous coordinate ring $S_X$ of its image $X$ is integrally
closed, that is we assume that $X$ is \emph{projectively normal}. 
We say that $X$ is \emph{subcanonical} if the dualising sheaf
$\omega_C$ is isomorphic to $j^{\star}\OO_{\p^{N+2}}(s)$, where $s\in \Z$.
The ring $S_X$ for projectively normal subcanonical curves works perfectly to
produce Artinian graded Gorenstein ring of 
socle dimension $1$. In fact it is not hard to show that, letting
$ \p^{N+2}:= \proj(\CC[\de_0,\dotsc,\de_{N+2}])$, and fixing $\eta_1,\eta_2$ two general
linear forms in $\CC[\de_0,\dotsc,\de_{N+2}]$, that can be assumed to be $\eta_1=\de_{N+1}$ and
$\eta_2=\de_{N+2}$, then the ring $A:=
\frac{S_X}{\gen{\eta_1,\eta_2}}$ is an Artinian graded Gorenstein ring of
socle dimension $1$. Hence, by the above mentioned Macaulay's result, $A$ can be
realised as $A=A^F=\frac{\CC[\de_0,\dotsc,\de_N]}{F^\perp}$
where $F\in \CC[x_0,\dotsc, x_N]$ 
is a homogeneous
polynomial. 

Hence the reader can see immediately that, given a projectively normal, subcanonical
curve $X\subset\p^{N+2}$, it remains defined a rational map:
\begin{equation}\label{eq:ac} 
\alpha_X\colon\G(N,N+2)\dashrightarrow H_{N,s}
\end{equation}
where $\G(N,N+2)$ is the Grassmannian of $N$-planes in $\p^{N+2}$ and
$H_{N,s}$ is the space of homogeneous polynomials of degree $s+2$
in $\check\p^N$ modulo the action of $\PGL(N,\CC)$, by the map
$\gen{\eta_1,\eta_2}\mapsto [F_{\eta_1,\eta_2}]:=[F]$.

In \cite{DZ} we have shown that a curve $C$ of genus $g\geq 3$ is either trigonal or isomorphic 
to a smooth plane quintic if and only if for every
couple of general sections
$\eta_1,\eta_2\in H^{0}(C,\omega_{C})$, $F_{\eta_{1},\eta_{2}}$ 
is a Fermat cubic. This characterisation of Fermat hypercubics was
obtained using the canonical embedding: \ie we considered the case $s=1$. 
Motivated by this result of \cite{DZ}, we say that a form $F$ is \emph{apolar} to a subcanonical 
curve $X$ if there exist two sections $\eta_1,\eta_2$ of the
$s$-subcanonical system such that $F=F_{\eta_1,\eta_2}$. 

In this paper, see
Theorem~\ref{miofratellos}, we prove:

\begin{thm}\label{miocugino}
Let $(C,\cL)$ be a polarised curve, such that $C\subset\abs{\cL}^\vee=:\check\p^N$ is a $s$-subcanonical 
projectively normal curve. 
Then $C$ is $(s+2)$-gonal if and only if for every couple of general sections
$\eta_1, \eta_2\in H^{0}(C,\cL)$, $F_{\eta_{1},\eta_{2}}$
is a Fermat $(s+2)$-tic. 
\end{thm}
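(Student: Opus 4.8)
The plan is to establish both directions by relating the gonality of the subcanonical curve $C$ to the algebraic structure of the apolar form $F_{\eta_1,\eta_2}$, exploiting the dictionary between apolarity and the geometry of the Artinian reduction $A=A^F$. The central observation is that a Fermat $(s+2)$-tic $F=\sum_{i=0}^N x_i^{s+2}$ is distinguished among all degree-$(s+2)$ forms by the simplicity of its apolar ideal $F^\perp$: the Macaulay dual generators are monomials in a single coordinate each, so $F^\perp$ is generated in a very controlled way (it contains all the ``mixed'' derivations $\partial_i\partial_j$ for $i\neq j$). Thus $F$ being Fermat is equivalent to the Apolar algebra $A^F$ being a quotient of the form $\CC[\partial_0,\dots,\partial_N]/(\partial_i\partial_j,\ i\neq j)$ plus the socle relation, i.e. $A^F$ decomposes as a ``coordinate-axis'' algebra. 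I would first make this algebraic characterisation precise and record the minimal free resolution / Betti numbers that single out the Fermat case.

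For the forward direction, I would assume $C$ is $(s+2)$-gonal and analyse the image of $\alpha_X$ at a general point $\gen{\eta_1,\eta_2}$. The key geometric input, which I expect to be available from the surrounding theory (and which the companion Enriques--Petri extension in the abstract suggests), is that an $(s+2)$-gonal $s$-subcanonical projectively normal curve lies on a rational normal surface $Y\subset\check\p^N$; the $(s+2)$-gonal pencil is cut out by the ruling of $Y$. I would then pass to the Artinian reduction: cutting by the two general forms $\eta_1,\eta_2$ degenerates the rational normal scroll structure in such a way that the quadric (or higher-degree) generators of $I_Y$ specialise to the ``mixed monomial'' generators $\partial_i\partial_j$ of $F^\perp$. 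Concretely, the ideal of the rational normal surface is generated by the $2\times 2$ minors of a catalecticant/Hankel matrix, and under the Artinian reduction these minors become the pairwise products of the variables, forcing $F$ to be Fermat by Macaulay duality. The main technical work here is to show that the gonal pencil controls exactly which bilinear relations survive the reduction, so I would carefully track the multiplication maps $A_1\otimes A_1\to A_2$ and identify $\ker$ with $\langle\partial_i\partial_j\rangle$.

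For the converse, suppose $F_{\eta_1,\eta_2}$ is a Fermat $(s+2)$-tic for general $\eta_1,\eta_2$. Then by the characterisation above the apolar algebra $A^F$, hence the Artinian reduction $S_X/\langle\eta_1,\eta_2\rangle$, is the coordinate-axis Gorenstein algebra, whose minimal resolution is that of $N+1$ points in linearly general position (or the corresponding Betti table). Lifting this back through the two general hyperplane sections by a standard deformation/semicontinuity argument, I would deduce that $X$ itself has the Betti numbers forcing it to lie on a rational normal surface, and then invoke the extended Enriques--Petri theorem (Theorem~\ref{miofratellos}, which this result reformulates) to conclude that $C$ is $(s+2)$-gonal. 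The delicate point is the lifting: one must ensure that the splitting of $A^F$ into coordinate axes is not an artefact of the special reduction but genuinely reflects a ruling on $X$, so I would argue that the general choice of $\eta_1,\eta_2$ makes the relevant Koszul cohomology / syzygy groups semicontinuous, so the rank-one behaviour seen after reduction persists on $X$.

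The step I expect to be the main obstacle is the precise matching, under the Artinian reduction, between the generators of the ideal of the rational normal surface (the $2\times2$ minors of the relevant matrix of linear forms) and the mixed-monomial generators $\partial_i\partial_j$ of the Fermat apolar ideal. Establishing that this correspondence holds for a \emph{general} pair $\eta_1,\eta_2$ — and not merely generically on some component — is where the genericity hypotheses in the statement must be used carefully, and it is here that I would expect the bulk of the genuine argument (as opposed to formal apolarity bookkeeping) to reside.
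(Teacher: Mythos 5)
Your overall route --- gonality $\Leftrightarrow$ containment in a rational normal surface $\Leftrightarrow$ Fermat --- is the paper's route: Theorem~\ref{miocugino} is proved there as part of Theorem~\ref{miofratellos}, and your reduction of the scroll's quadric generators to the mixed monomials $\de_i\de_j$ is essentially the paper's own argument for that implication (the paper phrases it via the Apolarity Lemma~\ref{lem:apo}: $\Gamma=S_{a_1,a_2}\cap V(\eta_1,\eta_2)$ is a length-$(N-1)$ scheme with $\II(\Gamma)\subset F_{\eta_1,\eta_2}^\perp$, and the socle degree is $s+2$ by Theorem~\ref{thm:4s}). The genuine gap is at the start of your forward direction: you take as an available input that an $(s+2)$-gonal $s$-subcanonical PN curve lies on a rational normal surface. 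That statement is Theorem~\ref{miacugina}, i.e.\ precisely the companion half of the theorem you are proving, and it is not available independently: in the paper's proof of Theorem~\ref{miofratellos}, gonality occurs only as a \emph{conclusion}, deduced from containment in the scroll via Proposition~\ref{prop:4gonas}; no implication with gonality as a \emph{hypothesis} is ever established there. So your plan for ``gonal $\Rightarrow$ Fermat'' is circular as written, and closing it would require you to actually prove the Enriques--Petri-type implication (e.g.\ that the divisors of the gonal pencil span lines in $\check\p^N$, whose union is then a surface of minimal degree); that is the substantive content your proposal is missing. (Your appeal to Theorem~\ref{miofratellos} at the end of the converse is harmless by contrast, since the implication needed there, containment $\Rightarrow$ gonality, is exactly Proposition~\ref{prop:4gonas}.)

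On the converse itself, your lifting step is both the wrong tool and a deferral of the real work. No semicontinuity or deformation argument is needed: since $C$ is PN, hence aCM (Proposition~\ref{prop:pn}), $\eta_1,\eta_2$ is a regular sequence on $S_C$, so the graded Betti numbers of $A=S_C/\gen{\eta_1,\eta_2}$ coincide \emph{exactly} with those of $S_C$; in particular every quadric of $F_{\eta_1,\eta_2}^\perp$ lifts to a quadric of $\II(C)$, which is how the paper writes them in Equation~\eqref{eq:quadrijs}: $Q_{i,j}=\de_i\de_j+\de_{N-1}L_{i,j}+\de_{N}M_{i,j}$. But this Betti-number information does not by itself ``force'' $C$ onto a rational normal surface: the crux is to show that the ideal generated by the lifted quadrics cuts out a \emph{surface} of degree $N-1$, i.e.\ of minimal degree, containing $C$, and this is where the paper spends its effort, using the hypothesis that $F_{\eta_1,\eta_2}$ is Fermat for \emph{every} general pair --- the quadrics arising from a second general pair must be linear combinations of the $Q_{i,j}$, and every general codimension-two linear section of $V\bigl((Q_{i,j})\bigr)$ is then a length-$(N-1)$ scheme, whence minimality of the degree. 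You correctly identify this matching as the main obstacle, but you propose no mechanism for it beyond semicontinuity, which does not address it; so the converse, too, is incomplete exactly at the step that carries the content.
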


and moreover, 
again see Theorem~\ref{miofratellos}:

\begin{thm}\label{miacugina}
Let $(C,\cL)$ be a polarised curve, such that $C\subset\abs{\cL}^\vee=:\check\p^N$ is a $s$-subcanonical 
projectively normal curve. 
Then $C$ is $(s+2)$-gonal if and only if $C$ is contained in a rational normal surface
$S_{a_1,a_2}$ with $a_{2} \leq a_{1}\leq (s+1)a_{2}+2$ and $N:=a_1+a_2+1$.
\end{thm}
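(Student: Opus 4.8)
The plan is to prove both implications, using scroll geometry for the reverse direction and the gonality pencil for the forward one; throughout we may invoke Theorem~\ref{miocugino}, which already equates $(s+2)$-gonality with the Fermat condition on $F_{\eta_1,\eta_2}$.

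For the implication $(\Leftarrow)$, suppose $C\subset S:=S_{a_1,a_2}$. Recall that $S$ is the image of the Hirzebruch surface $\FF_e$ with $e=a_1-a_2$, that $\pic(S)$ is generated by the ruling $f$ and the negative section $C_0$ (with $C_0^2=-e$, $C_0\cdot f=1$, $f^2=0$), and that the hyperplane class is $H=C_0+a_1f$, whence $\omega_S=-2H+(N-3)f$. Since $C$ is embedded by the complete system $|\cL|$, the hyperplane class restricts to $\OO_C(1)=\cL=H|_C$, and the subcanonical hypothesis reads $\omega_C=\cL^{\otimes s}=sH|_C$. By adjunction $\omega_C=(\omega_S+C)|_C$, so it suffices to identify the class of $C$ on $S$ as $(s+2)H-(N-3)f$, i.e.
\[
C\equiv (s+2)C_0+\bigl((s+1)a_1-a_2+2\bigr)f .
\]
In particular $C\cdot f=s+2$, so the ruling of $S$ cuts out on $C$ a base-point-free $g^1_{s+2}$ and $C$ has gonality at most $s+2$. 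Finally, the bound $a_1\le(s+1)a_2+2$ is \emph{exactly} the classical irreducibility criterion for the above class on $\FF_e$; this both guarantees the existence of such a $C$ and, together with a Clifford/Maroni estimate excluding a smaller pencil, shows that the gonality equals $s+2$.

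For the implication $(\Rightarrow)$, let $\pi\colon C\to\p^1$ be the degree-$(s+2)$ morphism attached to a base-point-free gonality pencil, and let $D_t=\pi^{-1}(t)$ be its fibres, each a divisor of degree $s+2$ on $C\subset\check\p^N$. First I would show that a general $D_t$ spans only a \emph{line}: by geometric Riemann--Roch for the $\cL$-embedding together with Serre duality and subcanonicity, the equality $\dim\langle D_t\rangle=1$ is equivalent to $h^0(\cL^{s-1}(D_t))-h^0(\cL^{s-1})=s$, which I would establish by applying the base-point-free pencil trick to the pencil $M:=\OO_C(D_t)$ twisted by $\cL^{s-1}$, controlling the kernel $H^0(\cL^{s-1}\otimes M^{-1})$ by a degree count and by projective normality. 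Granting this, $S:=\overline{\bigcup_t\langle D_t\rangle}$ is an irreducible surface ruled over $\p^1$ and containing the nondegenerate curve $C$; a standard computation of $\deg S$ via the scrollar invariants of $\pi$ gives $\deg S=N-1$, the minimal value for a nondegenerate surface in $\p^N$. As $S$ carries a pencil of lines it is not the Veronese surface, hence $S=S_{a_1,a_2}$ is a rational normal scroll; its invariants are read off from $\pi_*\OO_C$, and the constraint $a_2\le a_1\le (s+1)a_2+2$ falls out of the irreducibility of the class of $C$ computed in the first part.

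The main obstacle is the collinearity of the gonal fibres in the $(\Rightarrow)$ direction: in the canonical case $s=1$ this is the classical fact that three points of a $g^1_3$ impose only two conditions on $|\omega_C|$, but for $s>1$ the embedding $\cL$ is strictly smaller than the canonical one and the count must be forced by the precise interplay of subcanonicity ($\cL^{\otimes s}=\omega_C$), projective normality, and the pencil trick, rather than by pure Riemann--Roch. I expect this dimension count to be the technical heart of Theorem~\ref{miofratellos}. As a consistency check, the equivalence just obtained dovetails with Theorem~\ref{miocugino}: under apolarity a Fermat form $F_{\eta_1,\eta_2}$ corresponds to $N+1$ points in general linear position being apolar to $A^F$, and this minimal apolar configuration is precisely what pins the curve onto the scroll $S_{a_1,a_2}$, so the geometric and algebraic characterisations of $(s+2)$-gonality agree.
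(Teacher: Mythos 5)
Your reverse implication (scroll $\Rightarrow$ $(s+2)$-gonal) is essentially the paper's own argument, namely Proposition~\ref{prop:4gonas} together with Lemma~\ref{lem:pic}: adjunction plus injectivity of $\pic(S_{a_1,a_2})\to\pic(C)$ forces $C\sim(s+2)C_0+((s+1)a_1-a_2+2)f$, the ruling cuts out a $g^1_{s+2}$, and the bound $a_1\le(s+1)a_2+2$ is the smoothness criterion of \cite[V.2.18]{H}. (Your additional worry about excluding a smaller pencil is not an issue the paper addresses: there ``$(s+2)$-gonal'' is used in the sense of possessing a $g^1_{s+2}$.)

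The forward implication is where you genuinely diverge from the paper, and where your proposal has a real gap. The paper never performs the classical pencil-sweep at all: it proves (scroll $\Rightarrow$ Fermat) by applying the Apolarity Lemma~\ref{lem:apo} and Theorem~\ref{thm:4s} to $\Gamma=S_{a_1,a_2}\cap V(\eta_1,\eta_2)$, and (Fermat $\Rightarrow$ scroll) by writing out $F^{\perp}=(\de_i\de_j,\ \de_i^{s+2}-\de_j^{s+2})$, observing that the quadrics of $\II(C)$ must have the shape $Q_{i,j}=\de_i\de_j+\de_{N-1}L_{i,j}+\de_N M_{i,j}$, and showing that these quadrics cut out a surface of minimal degree; the gonality enters only through the scroll. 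Your Enriques--Petri-style construction $S=\overline{\bigcup_t\gen{D_t}}$ is a legitimate and more geometric alternative, but its crux --- that a general gonal fibre $D_t$ spans only a line --- is precisely the step you do not prove. Worse, your proposed reduction is circular as stated: by Riemann--Roch and Serre duality, the identity $h^0(\cL^{s-1}(D_t))-h^0(\cL^{s-1})=s$ \emph{is} the statement that $D_t$ imposes only two conditions on $\abs{\cL}$, so nothing has been gained; and the pencil trick applied to $\cL^{s-1}$ only yields $h^0(\cL^{s-1}(D_t))-h^0(\cL^{s-1})\ge c_{s-1}$, where $c_{s-1}:=h^0(\cL^{s-1})-h^0(\cL^{s-1}(-D_t))$. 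What is actually needed is the kernel bound $c_{s-1}\ge s$, and a ``degree count'' cannot give it: one must use projective normality, which makes $H^0(\OO_{\p^N}(s-1))\to H^0(\cL^{s-1})$ surjective, together with the elementary fact that any $s$ points of $\p^N$ impose independent conditions on forms of degree $s-1$; since a general $D_t$ consists of $s+2\ge s$ distinct points, this gives $c_{s-1}\ge s$, hence $c_1\le 2$, i.e.\ collinearity. Once collinearity is in hand, your remaining assertions do follow: the lines $L_t$ are the fibres of the rank-two quotient $W_t=H^0(\cL)/H^0(\cL(-D_t))$, and $\deg S\le\deg W=h^0(\cL(-D_t))=N-1$, which matched against the lower bound for nondegenerate irreducible surfaces pins $S$ as a surface of minimal degree with a pencil of lines, hence a rational normal scroll. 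So your outline is completable, but as written both the collinearity and the degree count are asserted rather than proved, at exactly the technical heart of the theorem.

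One further caution: your opening claim that you ``may invoke Theorem~\ref{miocugino}'' is not safe. In the paper, Theorems~\ref{miocugino} and~\ref{miacugina} are two faces of the single Theorem~\ref{miofratellos} and are proved simultaneously, so quoting the Fermat characterisation while proving the scroll characterisation is circular. You only use it as a consistency check, so nothing in your argument actually breaks, but it cannot be called upon to supply the missing implication.
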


The above theorems fit in a long path started by classical algebraic
geometers.  In fact, Theorem  \ref{miacugina} could be seen as a
natural generalisation of the trigonal case of the
Enriques-Petri Theorem (see \cite{E} for the original work of 
Enriques, completed by \cite{Ba}, and \cite{P}; modern versions are in \cite{So} and \cite{SD}) to the case of
$s$-subcanonical curves. 

Instead Theorem \ref{miocugino}, see the proof of Theorem 
\ref{miofratellos},
is an evidence
of the actual possibility to rewrite in a more algebraic terms at
least a part of the theory of curves using notions as Artinian Gorenstein
rings, Fermat's
hypersurfaces, etc. as substitutes of canonical ring, varieties of minimal
degree, etc. We note moreover that we could prove the classical Enriques-Petri Theorem 
with the same techniques used to prove~\ref{miofratellos}.

We also study in Proposition \ref{miazia} and in Proposition \ref{miazias} 
large classes of projectively normal subcanonical curves to estimate
the width of the class of curves to apply our theory.

Another aspect of our theory is to relate apolarity to plane curves.
Proposition \ref{cuginettoo} and Theorem \ref{cuginetto} are results
in this direction.

Finally, we want to stress that in this paper we use standard 
techniques of algebraic geometry, but the link that apolarity
establishes between the theory of Fermat hypersurfaces and the theory 
of $n$-gonal curves contained into varieties of minimal degree opens a range of 
ideas which are new and still not fully explored.

\begin{acks}
We would like to thank E. Ballico and G. Casnati for various remarks and suggestions. 
\end{acks}

\section{Preliminaries}



In this paper we will work with varieties and schemes over the complex field $\CC$. For us, 
a \emph{variety} will always be irreducible. 

\subsection{Arithmetic Gorenstein schemes }\label{ssec:ags}

Let us fix a closed subscheme $X$ of $\p^N$ of dimension $n\ge1$ and a
system $x_0,\dotsc,x_N$ of projective coordinates. Moreover, 
let us---as usual---denote by $\II_X$ the \emph{sheaf of ideals} of $X$ and by 
$M^r(X) := \oplus_{t\in\Z} H^r (\II_X(t))$, $1\le r\le n$, 
the \emph{$r$-th Hartshorne-Rao (or deficiency) module} of $X$. 
We recall the following

\begin{df}
Let $X$ be as above. Then, $X$ is said to be
\emph{arithmetically Cohen-Macaulay} (\emph{aCM} for short) 
if $N-n$ is equal to the length of a minimal free
resolution of its homogeneous coordinate ring
\begin{equation*}
S_X:=\frac{S}{M^0(X)}
\end{equation*}
as an $S$-module, where we have set $S:=\CC[x_0,\dotsc,x_N]$ and 
$M^0(X) := \oplus_{t\in\Z} H^0 (\II_X(t))$.
\end{df}

\begin{rmk}\label{rmk:1}
It can be proved that $X$ is aCM if and only if all the Hartshorne-Rao modules $M^r (X)$ vanish. 
(see for example \cite[1.2.2 and 1.2.3]{Mig}).
\end{rmk}

Then, we recall the fundamental 

\begin{df}
Let $X$ be as above. $X$ is said to be \emph{arithmetically Gorenstein} (\emph{aG} for short) if it is aCM 
and the last free module of a minimal free resolution of $S_X$ has rank $1$. 
\end{df}

Also we recall

\begin{df}\label{df:can}
 Any subscheme $X\subset\p^N$ is said to be \emph{subcanonical} if there exists 
an integer $s\in \Z$ such that $\omega_X\cong\OO_X(s)$, where $\omega_X$ is the dualising sheaf of $X$. 
\end{df}

We stress that the last two definitions are indeed equivalent for aCM schemes; in fact, denoting by 
\begin{equation*} 
K_X:=\Ext_S^{n-r}(S_X,S)(-n -1) = \Ext_S^{n-r}(S_X, K_{\p^N}) 
\end{equation*}
the \emph{canonical module} of $X$, it holds: 

\begin{prop}\label{senzanome}
If $X$ is an aCM closed subscheme of $\p^N$, then the following are equivalent:
\begin{enumerate}
\item $X$ is aG;
\item \label{senzanome2} $S_X\cong K_X(-s)$ for some integer $s$;
\item \label{senzanome3} the minimal free resolution of $S_X$ is self-dual, up to a twist. 
\end{enumerate}
\end{prop}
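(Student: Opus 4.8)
The plan is to prove the equivalence of the three conditions for an aCM scheme $X \subset \p^N$ by exploiting the duality theory encoded in the canonical module $K_X$ and its relationship to minimal free resolutions. I would organise the argument as a cycle of implications, say (1) $\Rightarrow$ (3) $\Rightarrow$ (2) $\Rightarrow$ (1), using standard properties of $\Ext$ over the polynomial ring $S$.

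First I would set up the minimal free resolution of $S_X$ as an $S$-module,
\begin{equation*}
0 \to F_{N-n} \to \cdots \to F_1 \to F_0 \to S_X \to 0,
\end{equation*}
which has length $N-n$ precisely because $X$ is aCM. The key tool is that applying $\Ho_S(-,S)$ (i.e.\ $\Ext_S^{\bullet}(-,S)$) to this resolution computes the canonical module: since $S_X$ is Cohen-Macaulay of codimension $N-n$, the only nonvanishing Ext is $\Ext_S^{N-n}(S_X,S)$, and dualising the resolution yields a minimal free resolution of $K_X$ up to the twist $(-n-1)$ built into the definition of $K_X$. For the implication (1) $\Rightarrow$ (3), I would use that aG means $F_{N-n}$ has rank $1$, hence $F_{N-n} \cong S(-c)$ for some integer $c$ (the last twist). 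Dualising the resolution then gives a resolution that begins and ends with rank-$1$ free modules, and by the uniqueness of minimal free resolutions one reads off that the dualised complex is isomorphic to the original one shifted by a twist; this is exactly self-duality up to a twist.

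For (3) $\Rightarrow$ (2): self-duality of the resolution says that $\Ho_S(F_\bullet, S)$, suitably shifted, is again a minimal free resolution of $S_X$. But $\Ho_S(F_\bullet,S)$ resolves $\Ext_S^{N-n}(S_X,S) = K_X(n+1)$, so comparing the two resolutions of the same module-up-to-twist forces $S_X \cong K_X(-s)$ for the appropriate integer $s$ determined by the twist. For (2) $\Rightarrow$ (1): if $S_X \cong K_X(-s)$, then the minimal free resolution of $S_X$ is isomorphic (up to twist) to that of $K_X$, which is the dual of the resolution of $S_X$; reversing the arrows shows the first free module $F_0 = S$ (rank $1$, since $S_X$ is cyclic) must correspond to the last free module of the resolution of $S_X$, forcing $F_{N-n}$ to have rank $1$, i.e.\ $X$ is aG.

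The main obstacle I anticipate is bookkeeping the grading and twists carefully, in particular pinning down the exact relationship between the socle degree, the integer $s$ appearing in (2), and the final twist $c$ in the resolution, so that the isomorphisms are genuinely degree-preserving and not merely abstract module isomorphisms. A secondary subtlety is justifying that minimality of the dualised resolution is preserved (the dual of a minimal complex over a local or graded ring is again minimal, since the differentials have entries in the maximal ideal), which is what lets me invoke uniqueness of minimal free resolutions to conclude the module isomorphisms. Once these grading conventions are fixed, the three implications are essentially formal consequences of graded local duality and the self-duality inherent in the Gorenstein condition.
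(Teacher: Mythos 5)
Your overall strategy (dualise the minimal free resolution, use that $\Ext_S^i(S_X,S)$ vanishes for $i\neq N-n$ while $\Ext_S^{N-n}(S_X,S)=K_X(n+1)$, and that the dual of a minimal complex is minimal) is the standard argument; the paper itself gives no proof at all, only a citation to Migliore's book, so yours is the right kind of argument to supply. Your implications (3) $\Rightarrow$ (2) and (2) $\Rightarrow$ (1) are sound, because in each of them you compare two minimal free resolutions of \emph{one and the same} module (the dual complex and a twist of the original complex both resolving $K_X(n+1)$), so uniqueness of minimal free resolutions applies legitimately. The gap is in (1) $\Rightarrow$ (3): there you invoke uniqueness of minimal free resolutions to identify the dualised complex with the original one, but these two complexes resolve \emph{different} modules --- the original resolves $S_X$, the dual resolves $K_X(n+1)$. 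Uniqueness says nothing about resolutions of two modules not yet known to be isomorphic; the fact that both complexes begin and end with rank-one free modules (and have the same intermediate ranks, which is automatic under dualisation) does not force them to be isomorphic, since the twists and the differentials could still differ. The identification you would need at exactly this point is $K_X(n+1)\cong S_X(c)$, which is statement (2) itself; so as written the step is circular, and the real content of the proposition is not the ``formal bookkeeping'' your last paragraph suggests.

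The repair is to prove (1) $\Rightarrow$ (2) first, and this requires one genuinely non-formal ingredient that your outline omits: faithfulness of the canonical module. Concretely, if $X$ is aG then the dual complex ends in $F_{N-n}^\vee\cong S(c)\to K_X(n+1)\to 0$, so $K_X(n+1)$ is a \emph{cyclic} $S$-module, hence $K_X(n+1)\cong \bigl(S/\operatorname{Ann}_S(K_X)\bigr)(c)$. One then has $I_X\subseteq \operatorname{Ann}_S(K_X)$ trivially, and the reverse inclusion $\operatorname{Ann}_S(K_X)\subseteq I_X$ is where the work lies: it follows from the fact that the canonical module of the Cohen--Macaulay ring $S_X$ is faithful, e.g.\ via $\Hom_{S_X}(K_X,K_X)\cong S_X$, or via the double-duality isomorphism $\Ext_S^{N-n}\bigl(\Ext_S^{N-n}(S_X,S),S\bigr)\cong S_X$ valid for Cohen--Macaulay modules. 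This yields $K_X(n+1)\cong S_X(c)$, i.e.\ (2). After that, (2) $\Rightarrow$ (3) is precisely the uniqueness argument you wanted to run (now applied to two minimal resolutions of the same module), and (3) $\Rightarrow$ (1) is immediate by reading off that the last free module dualises to the first one, $F_{N-n}^\vee\cong F_0(t)=S(t)$, so it has rank one; alternatively your chain (3) $\Rightarrow$ (2) $\Rightarrow$ (1) can be kept as is. With this reordering and the faithfulness step inserted, the proof is complete and agrees with the cited source.
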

\begin{proof} 
See \cite[Proposition 4.1.1]{Mig}.
\end{proof}

\begin{rmk}
To point out the integer $s$ of Definition~\ref{df:can}, we will say also 
that $X$ is \emph{$s$-subcanonical} or \emph{$s$-arithmetically Gorenstein} (and so, \emph{$s$-aG} for short).
\end{rmk}

\begin{df}
A projective closed subscheme $X\subset\p^N$ is said to be
\emph{projectively normal} (\emph{PN} for short) if $S_X$ 
is integrally closed. 
\end{df}

\begin{df}
Let $X\subset\p^N$ be a projective closed subscheme. We say that $X$ is \emph{$j$-normal}, 
with $j\in \Z$, $j\ge 0$, if the natural restriction 
map
\begin{equation*}
H^0(\OO_{\p^N}(j))\to H^0(\OO_X(j))
\end{equation*}
is surjective. 
\end{df}

\begin{rmk}
It is obvious that $X$ is $j$-normal if and only if $h^1(\II_X(j))=0$. Moreover, $X$ is PN if and only if 
it is $j$-normal for all $j\in \Z$, $j\ge 0$. 
In particular, an aCM scheme is PN by Remark~\ref{rmk:1}. 
\end{rmk}

Vice versa, it is not difficult to show that 
\begin{prop}\label{prop:pn}
A PN closed $n$-dimensional subvariety $X\subset\p^N$ is aCM if and only if 
\begin{equation*}
h^i(\OO_X(j))=0,\quad\textup{for } 0< i < n \textup{ and } \forall j\in\Z.
\end{equation*}
In particular, a PN curve is always aCM, and if the curve is subcanonical, then it is also aG. 
\end{prop}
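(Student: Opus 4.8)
The plan is to reduce the whole equivalence to the cohomological characterisation of the aCM property recalled in Remark~\ref{rmk:1}: $X$ is aCM if and only if all the Hartshorne-Rao modules $M^r(X)=\oplus_{t\in\Z}H^r(\II_X(t))$ vanish for $1\le r\le n$. I would then analyse these modules one degree at a time via the long exact cohomology sequence attached to
\[
0 \to \II_X(t) \to \OO_{\p^N}(t) \to \OO_X(t) \to 0,
\]
using throughout the vanishing $H^i(\OO_{\p^N}(t))=0$ for $0<i<N$ and all $t\in\Z$. (The case $n=N$ is the trivial one $X=\p^N$, so I may assume $n\le N-1$.)

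First I would isolate the module $M^1(X)$. Since $H^1(\OO_{\p^N}(t))=0$, the sequence gives $H^1(\II_X(t))=\coker\bigl(H^0(\OO_{\p^N}(t))\to H^0(\OO_X(t))\bigr)$. For $t<0$ the target vanishes because $X$ is an integral projective variety and $\OO_X(1)$ is very ample, while for $t\ge 0$ this cokernel is precisely the obstruction to $t$-normality. Hence $M^1(X)=0$ is equivalent to $X$ being $t$-normal for all $t\ge 0$, that is, to $X$ being PN; by hypothesis this module already vanishes. Next I would treat $M^r(X)$ for $2\le r\le n$. For these indices one has $0<r-1<N$ and $0<r<N$, so both $H^{r-1}(\OO_{\p^N}(t))$ and $H^r(\OO_{\p^N}(t))$ vanish, and the long exact sequence yields an isomorphism $H^r(\II_X(t))\cong H^{r-1}(\OO_X(t))$ for every $t$. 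Consequently $M^r(X)=0$ for all $2\le r\le n$ is equivalent to $h^i(\OO_X(j))=0$ for $0<i<n$ and all $j\in\Z$. Combining the two paragraphs gives the stated equivalence: under the PN hypothesis, $X$ is aCM exactly when its intermediate cohomology vanishes.

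The two ``in particular'' assertions then follow formally. For a curve $n=1$, so the range $0<i<n$ is empty and the cohomological condition holds vacuously; thus a PN curve is automatically aCM. If moreover the curve is $s$-subcanonical, i.e. $\omega_X\cong\OO_X(s)$, then because $X$ is now known to be aCM the canonical module is the full section module of the dualising sheaf, so $K_X=\oplus_t H^0(\omega_X(t))=\oplus_t H^0(\OO_X(s+t))=S_X(s)$, which is exactly condition~\eqref{senzanome2} of Proposition~\ref{senzanome}; hence $X$ is aG.

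The only genuinely delicate point is this last identification $K_X\cong S_X(s)$: it rests on the fact that aCM-ness is what guarantees that the canonical module coincides with the module of sections of $\omega_X$ (rather than merely its saturation), so that the sheaf-level subcanonical condition $\omega_X\cong\OO_X(s)$ translates cleanly into the module-level condition of Proposition~\ref{senzanome}. Everything else is bookkeeping with the cohomology of $\OO_{\p^N}$ and the index shift $r\mapsto r-1$.
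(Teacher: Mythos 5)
Your proof is correct. Note that the paper itself offers no argument for this proposition: its ``proof'' is the single citation \cite[1.5]{bk}, so your write-up supplies what the paper outsources. Your route is the natural self-contained one, and it stays entirely inside the paper's own toolkit: Remark~\ref{rmk:1} reduces aCM-ness to the vanishing of the Hartshorne--Rao modules $M^r(X)$, $1\le r\le n$; the long exact cohomology sequence of $0\to\II_X(t)\to\OO_{\p^N}(t)\to\OO_X(t)\to 0$, together with $H^i(\OO_{\p^N}(t))=0$ for $0<i<N$, identifies $M^1(X)$ with the total obstruction to $t$-normality (zero under the PN hypothesis, your observation that $H^0(\OO_X(t))=0$ for $t<0$ correctly disposing of negative twists) and gives $H^r(\II_X(t))\cong H^{r-1}(\OO_X(t))$ for $2\le r\le n$, which is precisely the stated intermediate-cohomology condition; the curve case is then vacuous. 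The one step that genuinely needs the care you give it is the aG conclusion: condition~\eqref{senzanome2} of Proposition~\ref{senzanome} concerns the graded module $K_X$, not the sheaf $\omega_X$, and the passage from $\omega_X\cong\OO_X(s)$ to $K_X\cong S_X(s)$ rests on both $K_X$ and $S_X$ being maximal Cohen--Macaulay over the two-dimensional ring $S_X$, hence of depth $\ge 2$, hence recovered as the full modules of twisted global sections of their sheafifications; this is exactly where aCM-ness (equivalently, saturatedness of $S_X$) enters, as you indicate. What the paper's citation buys is brevity and a reference covering the general statement; what your argument buys is transparency -- it makes visible that the equivalence is an elementary diagram chase once Remark~\ref{rmk:1} is granted, and that the aG claim is a formal consequence of Proposition~\ref{senzanome} plus one standard depth argument.
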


\begin{proof}
See for example \cite[1.5]{bk}. 
\end{proof}





\subsection{Apolarity}
Let $S:=\CC[x_0,\dotsc, x_N]$ be the polynomial ring in $(N+1)$-variables. 
The algebra of the partial derivatives on $S$, 
\begin{equation*}
T:=\CC[\de_0,\dotsc,\de_N],\qquad \de_i:=\frac{\de}{\de_{x_i}},
\end{equation*}
can act on the monomials in the following way:
\begin{equation*}
\de^a \cdot x^b=
\begin{cases}a!\binom{b}{a}x^{b-a} & \text{if $b\ge a$}\\
0 & \text{otherwise}
\end{cases}
\end{equation*}
where $a,b$ are multiindices $\binom{b}{a}=\prod_i\binom{a_i}{b_i}$, etc. 

We can think of $S$ as the algebra of partial derivatives on $T$ 
by defining
\begin{equation*}
x^a \cdot \de^b=
\begin{cases}a!\binom{b}{a}\de^{b-a} & \text{if $b\ge a$}\\
0 & \text{otherwise.}
\end{cases}
\end{equation*}

These actions define a perfect paring between the 
homogeneous 
forms in degree $d$ in $S$ 
and $T$ 
\begin{equation*}
S_d\times T_d\xrightarrow{\cdot} \CC.
\end{equation*}
Indeed, this is nothing but the extension of the duality between vector spaces: 
if $V:=S_1$, then $T_1=V^*$. 

Moreover, the perfect 
paring shows the natural duality between $\p^N:=\proj(S)$ and 
$\check\p^N=\proj(T)$. 
More precisely, if $(c_0,\dotsc, c_N)\in\check\p^N$, this gives 
$f_c:=\sum_i c_i x_i\in S_1$, and if $D\in T_a$, 
\begin{equation*}
D \cdot f_c^b=
\begin{cases}a!\binom{b}{a}D(c)f_c^{b-a} & \text{if $b\ge a$}\\
0 & \text{otherwise}.
\end{cases}
\end{equation*}
in particular, if $b\ge a$
\begin{equation*}\label{eq:lin}
0=D \cdot f_c^b \iff D(c)=0.
\end{equation*}

\begin{df}
We say that two forms, $f\in S$ and $g\in T$ are \emph{apolar} if 
\begin{equation*}
g \cdot f =f\cdot g =0.
\end{equation*}
\end{df}

Let $f\in S_d$ and $F:=V(f)\subset\p^N$ the corresponding hypersurface; 
let us now define 
\begin{equation*}
F^\perp:=\{D\in T\mid D\cdot f=0 \}
\end{equation*}
and
\begin{equation*}
A^F:=\frac{T}{F^\perp}.
\end{equation*}

\begin{lem}
The ring $A^F$ is Artinian Gorenstein of socle 
of 
dimension one and degree $d$. 
\end{lem}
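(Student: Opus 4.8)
The plan is to show that $A^F = T/F^\perp$ is Artinian, Gorenstein with one-dimensional socle, and of the stated degree, by using the apolarity pairing to identify $A^F$ explicitly in each graded piece. First I would observe that $F^\perp$ is a homogeneous ideal of $T$, so $A^F$ is a graded ring $A^F = \bigoplus_{t\ge 0} A^F_t$. The key structural fact is that the perfect pairing $S_d \times T_d \to \CC$ described above lets us compute each graded piece of $A^F$ as a dual space. Concretely, for $D \in T_t$ with $t \le d$, the condition $D \cdot f = 0$ is equivalent to saying $D$ pairs to zero against the subspace $\partial^{d-t}(f) := \{E \cdot f : E \in S_{d-t}\} \subseteq S_{d-t}$ of $(d-t)$-th order partials of $f$; hence $F^\perp_t = \bigl(\partial^{\,t}(f)\bigr)^{\perp}$ under the pairing $T_t \times S_t \to \CC$, and $A^F_t \cong \bigl(\partial^{\,d-t}(f)\bigr)^{*}$ as a vector space. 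For $t > d$ every derivation of order $t$ kills $f$, so $F^\perp_t = T_t$ and $A^F_t = 0$; this immediately gives that $A^F$ is Artinian with top nonzero degree exactly $d$ (since $f$ itself, viewed as the zeroth partial, is nonzero, $A^F_d \cong \CC$).

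Next I would establish the socle statement. The identification $A^F_t \cong \bigl(\partial^{\,d-t}(f)\bigr)^{*}$ shows in particular that $\dim A^F_d = \dim \bigl(\partial^{\,0}(f)\bigr) = \dim \gen{f} = 1$, so the top degree piece is one-dimensional. To see that the \emph{entire} socle is concentrated in degree $d$, I would argue that multiplication by the maximal ideal $\mathfrak m = \bigoplus_{t\ge1} T_t$ gives, for $t<d$, a pairing that is nondegenerate on $A^F$; equivalently, the apolarity pairing induces a perfect pairing
\begin{equation*}
A^F_t \times A^F_{d-t} \to A^F_d \cong \CC
\end{equation*}
for all $0 \le t \le d$. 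This perfect pairing is precisely the content of the Gorenstein symmetry, and it shows both that the Hilbert function is symmetric and that the socle is one-dimensional, living only in degree $d$. Hence $A^F$ is Gorenstein with socle dimension one and socle degree $d$; its degree (in the sense of the dimension of $A^F$ as a $\CC$-vector space, or equivalently the value forced by the socle degree $s+2 = d$) is therefore $d$.

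The main obstacle I expect is proving that the pairing $A^F_t \times A^F_{d-t}\to A^F_d$ is perfect, i.e. nondegenerate, rather than merely well defined. Well-definedness is formal from $F^\perp$ being an ideal, but nondegeneracy requires showing that if $D \in T_t$ satisfies $D \cdot (E\cdot f) = 0$ for all $E \in T_{d-t}$, then $D \in F^\perp$. The clean way to handle this is to invoke the associativity/commutativity of the contraction action, rewriting $D\cdot(E\cdot f) = (DE)\cdot f$, so that the condition becomes $(DE)\cdot f = 0$ for all $E \in T_{d-t}$; since products $DE$ range over $D \cdot T_{d-t}$ and the pairing $T_d \times S_d \to \CC$ is itself perfect, one deduces $D \cdot f$ pairs to zero against everything in degree $0$, forcing $D\cdot f = 0$ directly. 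I would therefore frame the whole argument around this single nondegeneracy computation, from which Artinianness, the one-dimensional socle, the socle degree $d$, and hence the Gorenstein property all follow. This is exactly Macaulay's duality, so I would cite \cite{Mac} for the underlying correspondence while presenting the pairing argument as the proof.
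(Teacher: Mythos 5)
Your argument is correct, but it is worth knowing that the paper does not actually prove this lemma: its entire ``proof'' is a citation to \cite[\S 2.3, p.~67]{ik}. So your self-contained argument does genuinely more work, and what you have written out is precisely the classical Macaulay duality argument that the cited reference contains: identify $A^F_t$ with the dual of the space of partials of $f$ via the catalecticant pairing, get Artinianness and $A^F_d\cong\CC$ from degree considerations, and obtain the socle statement from nondegeneracy of the induced pairing $A^F_t\times A^F_{d-t}\to A^F_d\cong\CC$, which rests on the associativity $D\cdot(E\cdot f)=(DE)\cdot f$ together with perfectness of the apolarity pairing --- exactly the computation you isolate as the crux. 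Two small indexing slips to fix before this is airtight: the space you pair $T_t$ against should be $\{E\cdot f : E\in T_{d-t}\}\subseteq S_t$ (you wrote $E\in S_{d-t}$ and placed the space inside $S_{d-t}$), and in the final nondegeneracy step the perfect pairing being invoked is $T_{d-t}\times S_{d-t}\to\CC$ applied to the element $D\cdot f\in S_{d-t}$, not $T_d\times S_d\to\CC$. Neither slip affects the structure of the argument. Finally, it would be worth stating explicitly the two facts you use implicitly: that $A^F_d\neq 0$ because $f\neq 0$, and that for a graded Artinian local $\CC$-algebra, one-dimensional socle is exactly the Gorenstein condition, so Gorensteinness follows from your pairing argument (a socle element in degree $t<d$ pairs to zero against all of $A^F_{d-t}$, hence vanishes) rather than needing any separate verification.
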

\begin{proof}
See \cite[\S 2.3 page 67]{ik}.
\end{proof}

\begin{df}
$A^F$ is called the \emph{apolar} Artinian Gorenstein ring of $F$.
\end{df}

It holds the \emph{Macaulay Lemma}, that is 
\begin{lem}
The map 
\begin{equation*}
F\mapsto A^F
\end{equation*}
is a bijection between the hypersurfaces $F\subset\p^N$ of degree 
$d$ and graded Artinian Gorenstein quotient rings 
\begin{equation*}
A:=\frac{T}{I}
\end{equation*}
of $T$ with socledegree $d$. 
\end{lem}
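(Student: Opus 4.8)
The plan is to realise $F\mapsto A^F$ as a bijection by passing to the projective picture: a degree-$d$ hypersurface is the same datum as a point $[f]\in\p(S_d)$, and a graded Artinian Gorenstein quotient $A=T/I$ of socle degree $d$ is the same datum as its homogeneous ideal $I$. The entire argument rests on two perfect pairings: the pairing $S_d\times T_d\to\CC$ already recorded above, and the Poincar\'e-type pairing $A_e\times A_{d-e}\to A_d$ carried by any graded Artinian Gorenstein algebra. The first observation is a degree-$d$ dictionary. For $D\in T_d$ one has $D\cdot f\in S_0=\CC$, so $(F^\perp)_d$ is exactly the hyperplane $f^\perp\subset T_d$; conversely, since $A$ has one-dimensional socle in degree $d$ we get $A_d\cong\CC$, hence $I_d$ is a hyperplane, and by perfectness of $S_d\times T_d\to\CC$ there is a unique class $[f]\in\p(S_d)$ with $f^\perp=I_d$. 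This recipe $I\mapsto[f]$ is the candidate inverse.

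Well-definedness and injectivity are then quick. Rescaling $f$ leaves $F^\perp$ unchanged, so $A^F$ depends only on $[f]$ and the map is defined on hypersurfaces; its target lands in the stated class by the previous lemma. If $A^F=A^{F'}$, i.e. $F^\perp=(F')^\perp$, then in particular $f^\perp=(f')^\perp$ in $T_d$, whence $[f]=[f']$ by perfectness, so $F=F'$.

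The real content is surjectivity: starting from $I$ with $T/I$ graded Artinian Gorenstein of socle degree $d$, and recovering $[f]$ from $I_d$ as above, I must prove $F^\perp=I$ in \emph{every} degree. For $I\subseteq F^\perp$, take homogeneous $D\in I_e$; to conclude $D\cdot f=0\in S_{d-e}$ it suffices, by perfectness of $S_{d-e}\times T_{d-e}\to\CC$, to check $E\cdot(D\cdot f)=(ED)\cdot f=0$ for all $E\in T_{d-e}$, and this holds because $ED\in I_d=f^\perp$ as $I$ is an ideal. The reverse inclusion $F^\perp\subseteq I$ is the step I expect to be the main obstacle, since it is the only place where the Gorenstein hypothesis is genuinely used rather than merely the ideal property: for homogeneous $D\in(F^\perp)_e$ I want the class $\bar D\in A_e$ to vanish, and the Gorenstein pairing $A_e\times A_{d-e}\to A_d\cong\CC$ reduces this to showing $\overline{ED}=0$ in $A_d$ for all $E\in T_{d-e}$; but $\overline{ED}=0$ amounts to $(ED)\cdot f=E\cdot(D\cdot f)=0$, which holds since $D\cdot f=0$. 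Perfectness of the Gorenstein pairing then forces $\bar D=0$, i.e. $D\in I$, and assembling the two inclusions gives $F^\perp=I$. The one point I would flag for care is the perfectness of $A_e\times A_{d-e}\to A_d$ for a graded Artinian Gorenstein algebra concentrated in degrees $0,\dots,d$; I would invoke this as the standard structural characterisation of the Gorenstein property (the Artinian shadow of the self-duality in Proposition~\ref{senzanome}), since the reverse inclusion cannot proceed without it.
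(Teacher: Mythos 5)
Your argument is correct, but there is nothing in the paper to compare it against: the paper does not prove this lemma, it simply cites \cite[Lemma 2.12, p.~67]{ik}. What you have written is, in effect, a self-contained reconstruction of the standard proof from that source: the inverse map is built in top degree via the perfect pairing $S_d\times T_d\to\CC$ (using that $A_d$ is the one-dimensional socle, so $I_d$ is a hyperplane), the inclusion $I\subseteq F^\perp$ uses only that $I$ is an ideal together with perfectness of $S_{d-e}\times T_{d-e}\to\CC$, and the inclusion $F^\perp\subseteq I$ uses perfectness of the multiplication pairing $A_e\times A_{d-e}\to A_d$. That last fact, which you rightly flag as the crux and the only place the Gorenstein hypothesis enters, is indeed standard and may be invoked as such; if you want it self-contained, it follows in two lines from the identification of the socle with $A_d$: if $0\neq\bar D\in A_e$ were orthogonal to $A_{d-e}$, pick the largest $j$ with $\bar D A_j\neq 0$; then $\bar D A_j$ is annihilated by the maximal ideal, hence lies in the socle $A_d$, forcing $j=d-e$ and contradicting orthogonality (and perfectness, rather than mere one-sided nondegeneracy, follows by comparing $\dim A_e\le \dim A_{d-e}$ with the same inequality for $d-e$). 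Two points you leave implicit are harmless but worth a word: in degrees $e>d$ both $I_e$ and $(F^\perp)_e$ are all of $T_e$ (since $A_e=0$ and every operator of order $>d$ kills $f$), so the two inclusions are trivial there; and $I_d\neq T_d$, i.e.\ the recovered $f$ is nonzero, precisely because the socle degree is $d$. So: correct proof, genuinely more informative than the paper's citation, at the cost of importing one standard structural fact about graded Artinian Gorenstein algebras.
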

\begin{proof}
See \cite[Lemma 2.12 page 67]{ik}.
\end{proof}

\subsubsection{Varieties of sum of powers}

Consider a hypersurface $F=V(f)\subset\p^N$ of degree $d$. 
\begin{df}
A subscheme $\Gamma\subset\check\p^N$ is said to be \emph{apolar to $F$} if
\begin{equation*}
\II(\Gamma)\subset F^\perp.
\end{equation*}
\end{df}

It holds the \emph{Apolarity Lemma}:

\begin{lem}\label{lem:apo}
Let us consider the linear forms $\ell_1,\dotsc,\ell_s\in S_1$ and 
let us denote by 
$L_1,\dotsc,L_s\in\check\p^N$ the corresponding points in the dual space. 
Then 
\begin{equation*}
\Gamma\ \textup{is apolar to}\ 
F 
= 
V(f),
\iff \exists \lambda_1,\dotsc,\lambda_s \in \CC^*\ \textup{such that}\ f=\lambda_1\ell_1^d+\dotsc+\lambda_s\ell_s^d
\end{equation*}
where $\Gamma:=\{L_1,\dotsc,L_s\}\subset \check\p^N$. If $s$ is minimal, then it is called the 
\emph{Waring number} of $F$. 
\end{lem}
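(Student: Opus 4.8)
The plan is to exploit the perfect pairing $S_d\times T_d\to\CC$ together with a single explicit computation: how a differential operator acts on a power of a linear form. Writing $\ell_i=f_{L_i}$ for the linear form attached to the point $L_i\in\check\p^N=\proj(T)$, the action formula recalled in the excerpt gives, for a homogeneous $D\in T_e$ with $e\le d$,
\begin{equation*}
D\cdot \ell_i^d=e!\binom{d}{e}\,D(L_i)\,\ell_i^{d-e},
\end{equation*}
where $D(L_i)$ is the evaluation of $D$, viewed as a form on $\check\p^N$, at $L_i$. In particular for $e=d$ this is the scalar $D\cdot\ell_i^d=d!\,D(L_i)$. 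This is the only computational input; everything else is linear algebra in the finitely many degrees $e\le d$, since any $D\in T_e$ with $e>d$ annihilates every degree-$d$ form and so lies in $F^\perp$ automatically.

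First I would record the degree-by-degree content of apolarity. Because $(F^\perp)_e=T_e$ for $e>d$, the containment $\II(\Gamma)\subset F^\perp$ is equivalent to $\II(\Gamma)_e\subset(F^\perp)_e$ for all $e\le d$, and by definition of the ideal of the reduced set $\Gamma=\{L_1,\dotsc,L_s\}$ one has $\II(\Gamma)_e=\{D\in T_e\mid D(L_i)=0,\ i=1,\dotsc,s\}$. For the implication ($\Leftarrow$) I would then argue directly: assuming $f=\sum_i\lambda_i\ell_i^d$ and taking $D\in\II(\Gamma)_e$ with $e\le d$, each $D(L_i)=0$, so the displayed formula gives $D\cdot\ell_i^d=0$ and hence $D\cdot f=\sum_i\lambda_i\,D\cdot\ell_i^d=0$; thus $\II(\Gamma)\subset F^\perp$.

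For the implication ($\Rightarrow$) I would work in degree $d$ and invoke nondegeneracy of the pairing. Set $W:=\gen{\ell_1^d,\dotsc,\ell_s^d}\subset S_d$. From $D\cdot\ell_i^d=d!\,D(L_i)$ one reads off that the annihilator of $W$ inside $T_d$ is exactly $W^\perp=\II(\Gamma)_d$. Since $S_d\times T_d\to\CC$ is perfect, double annihilation gives $W=(W^\perp)^\perp=\{\,g\in S_d\mid D\cdot g=0\ \forall D\in\II(\Gamma)_d\,\}$. Apolarity forces $\II(\Gamma)_d\subset(F^\perp)_d$, i.e. $D\cdot f=0$ for all $D\in\II(\Gamma)_d$, so $f\in W$ and $f=\sum_i\lambda_i\ell_i^d$ for suitable $\lambda_i\in\CC$. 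Discarding the indices with $\lambda_i=0$, which only shrinks $\Gamma$ to a subset that is still apolar, yields a representation with all $\lambda_i\in\CC^*$; minimality of $s$ then defines the Waring number.

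The genuine crux is the computation in the first paragraph: once the action of $T_e$ on $\ell_i^d$ is pinned down, both directions are essentially forced by perfectness of the pairing. The only points demanding care are the reduction of the all-degrees containment $\II(\Gamma)\subset F^\perp$ to the finitely many relevant degrees $e\le d$, and, in the ($\Rightarrow$) direction, the harmless bookkeeping with vanishing coefficients required to reach the stated form with $\lambda_i\in\CC^*$ rather than merely $\lambda_i\in\CC$.
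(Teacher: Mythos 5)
Your proposal is correct, but there is no in-paper argument to compare it with: the authors do not prove this lemma at all, they simply cite Iarrobino--Kanev (\cite[Lemma 1.15]{ik}). What you wrote is the standard proof of the Apolarity Lemma, and it is essentially the argument behind that reference: the identity $D\cdot\ell_i^d=e!\binom{d}{e}D(L_i)\,\ell_i^{d-e}$ handles $(\Leftarrow)$ degree by degree (using that both ideals are homogeneous and that $(F^\perp)_e=T_e$ for $e>d$), and the double-annihilator argument in the perfect pairing $S_d\times T_d\to\CC$ gives $(\Rightarrow)$; note that your $(\Rightarrow)$ actually proves something slightly stronger, namely that the single containment $\II(\Gamma)_d\subset(F^\perp)_d$ in degree $d$ already forces $f\in\gen{\ell_1^d,\dotsc,\ell_s^d}$. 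One point you should state more bluntly: the issue you dismiss as ``harmless bookkeeping'' is an imprecision in the lemma as stated, not a defect of your proof. With all $s$ coefficients required to lie in $\CC^*$, the forward implication is literally false: for $f=x_0^d$ and $\Gamma=\{[1:0:\dotsb:0],[0:1:0:\dotsb:0]\}$ one checks that $\II(\Gamma)\subset F^\perp$ (in each degree $e\le d$, $(F^\perp)_e$ is the span of the monomials other than $\de_0^e$, while $\II(\Gamma)_e$ omits both $\de_0^e$ and $\de_1^e$), yet $x_0^d$ is not a combination of $x_0^d$ and $x_1^d$ with both coefficients nonzero. So the honest conclusion of $(\Rightarrow)$ is exactly what you prove: $f$ lies in the span of $\ell_1^d,\dotsc,\ell_s^d$, equivalently it is a $\CC^*$-combination of $d$-th powers taken over a subset of $\Gamma$; this is also the form in which the lemma is actually used later in the paper, so nothing downstream is affected.
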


\begin{proof}
See \cite[Lemma 1.15 page 12]{ik}.
\end{proof}



\subsection{Hypercubics and canonical sections}
In \cite{DZ}, 
we studied the special case of the canonical curve 
$C\subset\check\p^{g-1}$ of the map introduced in \eqref{eq:ac}.
In fact it is a well-known result that $C$ is PN (see \cite[page 117]{ACGH}), 
and then, by Proposition~\ref{prop:pn}, the canonical curve $C$ is aG. 
Therefore, if we take two general linear forms 
$\eta_1,\eta_2\in ({\R_C})_1=H^0(\omega_C)$, then 
$T:= \frac{\R_C}{\gen{\eta_1,\eta_2}}$ 
is Artinian Gorenstein, and its values of the Hilbert function are 
$1, g-2, g-2, 1$. In particular, the socledegree of $T$ is 
$3$, and by the Macaulay Lemma, this defines a hypercubic in 
$\proj(T^*)$. In this way we obtain the rational map 
$\alpha_C\colon\G(g-2,g)\dashrightarrow H_{g-3,3}$. 
In this paper we generalise this idea to the 
$s$-subcanonical curves.



\section{Subcanonical curves}

Let $(C,\cL)$ be a 
\emph{polarised curve}, \ie $C$ is a smooth curve and $\cL$ is a 
line bundle on $C$. We will suppose also that the complete linear system $\abs{\cL}$ \emph{embeds} $C$ in 
$\p^N=\abs{\cL}^*$, \ie that $\cL$ is very ample. 
Therefore, $(C,\cL)$ is subcanonical if there exists an $s\in \Z$ 
such that $\cL^{\otimes s}\cong\omega_C$. 

\begin{rmk}
Since we request $\cL$ to be very ample, if $C$ is also subcanonical, 
\emph{$C$ 
cannot be hyperelliptic}. 
\end{rmk}

Clearly, the only possible $s\le 0$ are $s=-2,-1$ 
and $C$ is rational, or $s=0$, and $C$ is elliptic. If $s=1$, 
then $C$ is a canonical curve. 
Therefore, from now on, for simplicity, we will say that $(C,\cL)$ is \emph{subcanonical} 
if there exists an $s\in \NN$, $s>1$, such that $\cL^{\otimes s}\cong\omega_C$. 
If $s=2$, we will talk of \emph{half-canonical} curves.
In other words, we restrict Definition~\ref{df:can} to $s$-subcanonical curves with $s\ge 2$. 

As we have observed in Subsection~\ref{ssec:ags}, a subcanonical PN curve is always aG, 
therefore the construction made for the canonical curves can be extended to these curves.

Let us show this in the case of the half-canonical curves. We have supposed that $\cL$ is very
ample and that embeds $C$ in $\p^N$, and we suppose also that $C$ is \emph{PN, with this embedding}. 
Clearly, by the theorem of Clifford (see for example \cite[IV.5.4]{H}) 
\begin{equation*}
N\le\frac{g-1}{s},
\end{equation*}
with equality only if $s=1$, \ie $\cL=\omega_C$.

To ease reading we first consider the case $s=2$.

\begin{prop}\label{prop:4}
Let $(C,\cL)$ be a half canonical curve, embedded by the very ample, complete, linear system $\abs{\cL}$ in
$\p^N$ and with image $X$. 
Let $\eta_1$ and $\eta_2$ two general linear forms on $\p^N$. Let us suppose that
$X$ is PN. 
Then, $X$ is aG and the values of the Hilbert function of the Artinian Gorenstein graded $\CC$-algebra 
$A:=\frac{S_X}{\gen{\eta_1,\eta_2}}$ 
are $1, N-1,g-1-2N, N-1, 1$. In particular, $A$ is Artinian Gorenstein of socle 
of dimension one and degree $4$.
\end{prop}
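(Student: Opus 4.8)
The plan is to establish the aG property first and then compute the Hilbert function explicitly via the exact sequences governing hyperplane sections and the subcanonical condition. Since we are given that $X$ is PN, Proposition~\ref{prop:pn} immediately gives that $X$ is aCM, and because $X$ is half-canonical (hence subcanonical) it is aG by the same proposition; this disposes of the qualitative claims. The substantive work is the computation of the five Hilbert-function values $1, N-1, g-1-2N, N-1, 1$, which simultaneously pins down that the socle degree is $4$.

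\emph{First} I would record the Hilbert function of $S_X$ itself. Since $X$ is aG and $s$-subcanonical with $s=2$, Proposition~\ref{senzanome}\eqref{senzanome2} gives the self-duality $S_X\cong K_X(-2)$, which forces the Hilbert function of $S_X$ to be symmetric about the point dual to the socle degree. Concretely, $\dim(S_X)_0=1$, $\dim(S_X)_1=N+1$, and for $t\ge 1$ one has $\dim(S_X)_t=h^0(\cL^{\otimes t})$, computed by Riemann--Roch: for $t\ge 2$ the line bundle $\cL^{\otimes t}$ has degree $t\deg\cL = t(2g-2)/2 = t(g-1)$ and, being of degree $>2g-2$, is nonspecial, so $\dim(S_X)_t = t(g-1)-g+1 = (t-1)(g-1)$. \emph{Then} the effect of quotienting by the two general linear forms $\eta_1,\eta_2$ is to multiply the Hilbert series by $(1-u)^2$ at the level of generating functions, since a general linear form on an aCM scheme of dimension $\ge 1$ is a nonzerodivisor, and successively cutting down by two such forms is what produces the Artinian ring $A$ of Krull dimension $0$.

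\emph{Next} I would carry out this second-difference computation degree by degree. Writing $h_X(t):=\dim(S_X)_t$ with values $1, N+1, g-1, 2(g-1), 3(g-1),\dotsc$ and $h_A(t)=h_X(t)-2h_X(t-1)+h_X(t-2)$ (with the convention $h_X(t)=0$ for $t<0$), one checks $h_A(0)=1$, $h_A(1)=(N+1)-2=N-1$, and $h_A(2)=(g-1)-2(N+1)+1=g-1-2N$. The symmetry of $h_A$ forced by the aG property of $A$ (socle degree $4$) then gives $h_A(3)=h_A(1)=N-1$ and $h_A(4)=h_A(0)=1$, with $h_A(t)=0$ for $t\ge 5$; one verifies vanishing at $t=5$ directly from the formula for consistency. \emph{The main obstacle} is justifying rigorously that $\eta_1,\eta_2$ general can indeed be taken to be a regular sequence, i.e.\ that the quotient by the linear ideal $\gen{\eta_1,\eta_2}$ really does act as multiplication by $(1-u)^2$ on the Hilbert series without any correction terms; this is where genericity of the linear forms and the aCM hypothesis on $X$ (a one-dimensional, hence positive-dimensional, aCM scheme admits a regular sequence of two general linear forms) are essential, and I would invoke the standard fact that on an aCM scheme a general linear form is a nonzerodivisor on $S_X$ to secure it.
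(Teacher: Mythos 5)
Your strategy---compute the Hilbert function of $S_X$ from projective normality and Riemann--Roch, then take second differences since $\eta_1,\eta_2$ is a regular sequence---is sound in outline, and it is a genuinely more elementary route than the paper's, which runs the Base-Point-Free Pencil trick on $0\to\cL^{\otimes i}\to\gen{\eta_1,\eta_2}\otimes\cL^{\otimes i+1}\to\cL^{\otimes i+2}\to 0$ and chases a commutative diagram to identify each graded piece $A_{i+2}$ with a cokernel. However, your execution contains a genuine error at the crucial spot. You claim $\cL^{\otimes t}$ is nonspecial for all $t\ge 2$, giving $\dim(S_X)_2=g-1$. This fails precisely at $t=2$: since $C$ is half-canonical, $\cL^{\otimes 2}\cong\omega_C$, which is special ($h^1(\omega_C)=1$), so $\dim(S_X)_2=h^0(\omega_C)=g$, not $g-1$; nonspeciality only starts at $t\ge 3$. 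You then recover the stated answer only through a compensating arithmetic slip: $(g-1)-2(N+1)+1$ equals $g-2N-2$, not $g-1-2N$ as you wrote, whereas the correct value $\dim(S_X)_2=g$ gives $h_A(2)=g-2(N+1)+1=g-1-2N$ as desired.

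This error is not cosmetic, because it propagates: with your table $1,\,N+1,\,g-1,\,2(g-1),\,3(g-1),\dotsc$ a direct computation gives $h_A(3)=2(g-1)-2(g-1)+(N+1)=N+1$ and $h_A(4)=0$, flatly contradicting the symmetry you invoke---and that appeal to symmetry is itself circular, since it presupposes the socle degree is $4$, which is part of what must be proved. The extra $+1$ in $h^0(\omega_C)=g$ coming from the speciality of the canonical bundle in degree $2$ is exactly what creates the socle in degree $4$; erase it and the algebra appears to die in degree $3$. The fix is straightforward: use the correct table $1,\,N+1,\,g,\,2(g-1),\,3(g-1),\dotsc,(t-1)(g-1),\dotsc$ and compute \emph{all} second differences directly, obtaining $1,\;N-1,\;g-1-2N,\;N-1,\;1,\;0,0,\dotsc$ with no symmetry argument needed at all; the socle degree $4$ is then read off rather than assumed. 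With that correction (and your justification that two general linear forms form a regular sequence on the two-dimensional Cohen--Macaulay ring $S_X$, which is fine), your argument becomes complete and correct, and remains a cleaner alternative to the paper's diagram chase.
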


\begin{proof}
We can write the homogeneous coordinate ring of $X$ as
\begin{equation*}
S_X=\frac{\Sym(H^0(C,\cL))}{M^0(C)}, 
\end{equation*}
and the Artinian Gorenstein algebra $A$ as 
\begin{equation*}
A=\frac{S_X}{\gen{\eta_1,\eta_2}}, 
\end{equation*}
where $\eta_1$ and $\eta_2$ are two (general) linear forms. 

Let us consider the following exact sequence of 
sheaves
\begin{equation*}
0\to \cL^{\otimes i}\to \gen{\eta_1,\eta_2}\otimes \cL^{\otimes i+1}\to\cL^{\otimes 1+2}\to 0, 
\end{equation*}
with $i\in \Z$, $i\ge 0$. By the Base-Point-Free Pencil trick 
(see \cite[page 126]{ACGH}) the sequence gives, in cohomology, 
\begin{multline}\label{eq:c0}
i=0:\ 0\to H^0(\OO_C)\to \gen{\eta_1,\eta_2}\otimes H^0(\cL)\to H^0(\omega_C)\to\\ 
\to H^1(\OO_C)\to \gen{\eta_1,\eta_2}\otimes H^1(\cL)\to H^1(\omega_C)\to 0
\end{multline}
\begin{multline}\label{eq:c1}
i=1:\ 0\to H^0(\cL)\to \gen{\eta_1,\eta_2}\otimes H^0(\omega_C)\to H^0(\cL^{\otimes 3})\to\\ 
\to H^1(\cL)\to \gen{\eta_1,\eta_2}\otimes H^1(\omega_C)\to 0  
\end{multline}
\begin{align}
\label{eq:c2}
i&=2:\ 0\to H^0(\cL^{\otimes 2})\to \gen{\eta_1,\eta_2}\otimes H^0(\cL^{\otimes 3})\to H^0(\cL^{\otimes 4})\to 
H^1(\omega_C)\to 0\\ \label{eq:c3}
i&>2:\ 0\to H^0(\cL^{\otimes i})\to \gen{\eta_1,\eta_2}\otimes H^0(\cL^{\otimes i+1})\to H^0(\cL^{\otimes i+2})\to 0, 
\end{align}
with $h^0(\omega_C)=g$, $h^1(\cL)=h^0(\cL)=N+1$.

Moreover, the standard exact sequence of sheaves 
\begin{equation*}
0\to \II_X(i)\to \OO_{\p^N}(i)\to \OO_X(i)\to 0, 
\end{equation*}
with $i\in \NN$, gives rise to the following sequence in cohomology 
\begin{equation}\label{eq:ci2}
0\to I_X(i)\to \Sym^i(H^0(\cL))\to H^0(\cL^{\otimes i})\to 0, 
\end{equation}
Since $X$ is PN by hypothesis. 
Now, let us write $A$ as a graded algebra
$A=\bigoplus_i A_i$; 
then 
\begin{equation*}
A_i=\frac{\Sym^i(H^0(C,\cL))}{\gen{I_X(i),(\eta_1,\eta_2)\otimes \Sym^{i-1}(H^0(\cL)}}; 
\end{equation*}
we have clearly that 
\begin{equation*}
A_0\cong \CC,\quad
A_1=\cong \frac{\CC[x_0,\dotsc,x_N]_1}{\gen{\eta_1,\eta_2}}
\end{equation*}
\ie $A_1\cong \CC^{N-1}$. 
For the rest, putting together sequences \eqref{eq:c0},\eqref{eq:c1}, \eqref{eq:c2} and \eqref{eq:c3}, 
and \eqref{eq:ci2} we obtain the 
commutative diagram, with $i\ge 0$ 
\begin{equation}\label{d:1}
\begin{xymatrix}{
& & & 0\\
& & &\coker(\mu)\ar@{^{(}->}[r]\ar[u]& H^1(\cL^{\otimes i})\\
0\ar[r]& I_X(i+2)\ar[r]& \Sym^{i+2}(H^0(\cL)) \ar[r]^{\varphi} & H^0(\cL^{\otimes i+2})\ar[r]\ar[u]^p& 0\\ 
0\ar[r]&\gen{\eta_1,\eta_2}\otimes I_X(i+1) \ar[r] \ar[u]& \gen{\eta_1,\eta_2}\otimes \Sym^{i+1}(H^0(\cL)) 
\ar[r] \ar[u]& \gen{\eta_1,\eta_2}\otimes H^0(\cL^{\otimes i+1}) \ar[r] \ar[u]^\mu& 0.\\
& & & H^0(\cL^{\otimes i})\ar[u]\\
& & & 0\ar[u]}
\end{xymatrix}
\end{equation}
Chasing in it, if we take an 
$\omega\in \Sym^{i+2}(H^0(\cL))$, 
and denoting by $[\omega]$ its 
class in $A_{i+2}$, we have that $[\omega]\neq 0$ if and only if 
$p\circ\varphi(\omega)\in\coker(\mu)\setminus\{0\}$.  
In other words,
\begin{equation}
\coker(\mu)\cong A_{i+2}
\end{equation}
in Diagram~\eqref{d:1}. 
It follows that, if $i>2$, $A_{i+2}=\{0\}$, since $H^1(\cL^{\otimes i})=\{0\}$. 

If $i=2$, we have that $\coker(\mu)\cong H^1(\omega_C)$, by \eqref{eq:c2}, 
so $\dim\coker(\mu)=1$, and therefore $\dim(A_4)=1$. 

If $i=1$, $A_3\cong \CC^{N-1}$ either by the Artinian property, or by Diagram~\eqref{d:1} together with 
Sequence~\eqref{eq:c1}. 

If $i=0$, $A_2\cong \CC^{g-2N-1}$, again by Diagram~\eqref{d:1} and Sequence~\eqref{eq:c0}.


\end{proof}

Now we give the analogue statement of the above one for the general 
$s$-subcanonical curve, 
but, since we do not really need to compute 
the complete Hilbert function we can give a simpler proof suggested 
to us by G. Casnati. 


\begin{thm}\label{thm:4s}
Let $(C,\cL)$ be an $s$-subcanonical curve, $s\ge 2$, embedded by the very ample, complete, 
linear system $\abs{\cL}$ in $\p^N$. 
Let $\eta_1$ and $\eta_2$ two general linear forms on $\p^N$. Let us suppose that $C$ is PN. 
Then, $C$ is aG and the Artinian Gorenstein graded $\CC$-algebra $A:=\frac{S_C}{\gen{\eta_1,\eta_2}}$
is of socle dimension one and degree $s+2$. 
\end{thm}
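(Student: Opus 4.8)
The plan is to deduce everything from the Gorenstein structure of $S_C$ together with the behaviour of the canonical module under quotient by a regular sequence of linear forms. First I would record that $C$ is aG: since $C$ is PN by hypothesis and subcanonical, Proposition~\ref{prop:pn} gives that $C$ is aCM, and indeed aG. Thus $S_C$ is a graded Cohen--Macaulay domain (as $C$ is a variety) of Krull dimension $2$, and it is Gorenstein. Because $S_C$ is Cohen--Macaulay of dimension $2$, a general pair of linear forms $\eta_1,\eta_2$ is a homogeneous regular sequence: a general $\eta_1$ is a nonzerodivisor on the domain $S_C$, and since the Cohen--Macaulay ring $S_C/(\eta_1)$ has only finitely many associated primes, all of dimension $1$, a general $\eta_2$ avoids them and is a nonzerodivisor on it. Consequently $A=S_C/\gen{\eta_1,\eta_2}$ has Krull dimension $0$, i.e.\ is Artinian, and since the quotient of a Gorenstein ring by a regular sequence is again Gorenstein, $A$ is Artinian Gorenstein; in particular its socle is one-dimensional.

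It remains to identify the socle degree. Here I would track the canonical module. By condition~\ref{senzanome2} of Proposition~\ref{senzanome}, the $s$-subcanonical hypothesis means $S_C\cong K_{S_C}(-s)$, that is $K_{S_C}\cong S_C(s)$. The key tool is the twist formula for a degree-one nonzerodivisor $\ell$ on a graded Cohen--Macaulay ring $R$:
\begin{equation*}
K_{R/\ell R}\cong (K_R/\ell K_R)(1),
\end{equation*}
obtained by applying the duality functor $\Ext_S(-,K_{\p^N})$ to the short exact sequence $0\to R(-1)\xrightarrow{\ell}R\to R/\ell R\to 0$ and using that the only nonvanishing $\Ext_S$ of the Cohen--Macaulay module $R$ sits in a single cohomological degree, so that the connecting homomorphism identifies $K_{R/\ell R}$ with $\coker\bigl(K_R\xrightarrow{\ell}K_R(1)\bigr)=(K_R/\ell K_R)(1)$.

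Applying this formula first with $R=S_C$, $\ell=\eta_1$ and then with $R=S_C/(\eta_1)$, $\ell=\eta_2$ gives the bookkeeping cleanly: whenever $K_R\cong R(m)$, the formula yields $K_{R/\ell R}\cong (R(m)/\ell R(m))(1)\cong (R/\ell R)(m+1)$, so each quotient by a linear nonzerodivisor raises the twist by one. Starting from $K_{S_C}\cong S_C(s)$, after the two steps I obtain $K_A\cong A(s+2)$. For an Artinian graded algebra the canonical module is the graded $\CC$-dual $A^\vee=\Hom_\CC(A,\CC)$, whose nonzero graded pieces $(A^\vee)_j=(A_{-j})^*$ occupy exactly the degrees $-d\le j\le 0$, where $d$ is the top nonzero degree of $A$, i.e.\ its socle degree. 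On the other hand $A(s+2)$ is nonzero exactly in the degrees $-(s+2)\le j\le d-(s+2)$. Comparing the two ranges forces $d=s+2$, so $A$ has socle degree $s+2$, as claimed.

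The main obstacle is the bookkeeping of twists: one must fix conventions so that Proposition~\ref{senzanome} reads $K_{S_C}\cong S_C(s)$ with the correct sign, verify the degree-one twist formula above with its shift by $+1$, and check that this shift is genuinely additive over the two successive quotients. A secondary point requiring care is the regular-sequence claim for \emph{general} $\eta_1,\eta_2$, which is precisely where the genericity hypothesis enters; everything else is formal once $C$ is known to be aG.
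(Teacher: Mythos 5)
Your proof is correct and takes essentially the same route as the paper: both establish that $\eta_1,\eta_2$ form a regular sequence so that $A$ is Artinian Gorenstein with one-dimensional socle, and both determine the socle degree by tracking the canonical module through the quotient, $K_{S_C}\cong S_C(s)\Rightarrow K_A\cong A(s+2)$. The only difference is expository: where the paper cites \cite[Proposition 3.1.19(b)]{BH} and \cite[Corollary 3.6.14]{BH} and reads the socle degree off the $a$-invariant, you prove the degree-one twist formula via $\Ext$ and identify $K_A$ with the graded dual by hand.
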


\begin{proof}
In fact, by Proposition~\ref{prop:pn}, $S_C$ is aG
(here and in what follows, by abuse of notation, 
we will call aG the homogeneous coordinate ring of an aG scheme).

Therefore, also $A=S_X/(\eta_1,\eta_2)$ is aG since 
$\eta_1,\eta_2$ is a regular sequence: see for  example 
\cite[Proposition 3.1.19(b)]{BH}.
The ring $A$ is obviously graded, then by Proposition 
\ref{senzanome}\eqref{senzanome3} it has symmetric Hilbert 
function since it is aG. By symmetry, the socle of $A$ is of 
dimension $1$. Now, 
it remains to prove that the socle of $A$ is of degree  $s+2$. Let $K_{A}$ be the \emph{canonical model} of $A$, see 
\cite[Definition 3.6.8 page 139, also page 140]{BH} and let 
$a(A)$ be the \emph{$a$-invariant} of $A$, 
see \cite[Definition 3.6.13]{BH}. 
By Proposition \ref{senzanome}\eqref{senzanome2}, $K_C=S_C(s)$, then
$K_A=A(s+2)$ since \cite[Corollary 3.6.14]{BH}. In particular, 
$a(A)=s+2$ by \cite[Corollary 3.6.14]{BH}.
This means $A_{s+2}\neq 0$ and 
$A_i=0$ for  $i\ge s+3$ 
(see the remark which follows \cite[Theorem 3.6.19]{BH}. 
\end{proof}

\begin{rmk}
With simple but tedious calculations we could find the values of the Hilbert function of $A$ of the 
preceding theorem. 
\end{rmk}

\subsection{Plane curves}
There are various ways to obtain half-canonical (or---more generally---subcanonical) curves. 
One of these is via (smooth) plane curves and Veronese embeddings of the plane. 

So, let us consider a (non-hyperelliptic) 
plane curve $C$ of odd degree $2n+1$; 
therefore, $C\in\abs{(2n+1)H}$, where $H$ is the hyperplane divisor on $\p^2$. 
By adjunction, we have, by abuse of notation 
\begin{equation}\label{eq:2n}
\omega_C=2(n-1)H{\mid_C}.
\end{equation}
By the Clebsch formula, 
its genus is 
$g(C)=n(2n-1)$. 

Now, consider the $(n-1)$-th embedding of $\p^2$, 
$v_{n-1}\colon\p^2\to\p^N,$ 
where $N:=\binom{n+1}{2}-1$. $V_{n-1}:=v_{n-1}(\p^2)$ is a \emph{Veronese surface}, it is 
PN of degree $(n-1)^2$. 

By the projective normality of $V_{n-1}$ it follows that
$X:=v_{n-1}(C)\subset V_{n-1}$ is PN also: 

\begin{prop} 
Let $C$ be a smooth plane curve of odd degree $2n+1$, and let $X$ be its $(n-1)$-tuple Veronese embedding 
$X:=v_{n-1}(C)\subset V_{n-1}\subset \p^N$, with $N=\binom{n+1}{2}-1$. Then, $X$ is PN and aG. 
\end{prop}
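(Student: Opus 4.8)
The plan is to establish the two claimed properties of $X:=v_{n-1}(C)$ separately, first projective normality and then the arithmetically Gorenstein property, using the structure already set up in the excerpt.

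\textbf{Projective normality.} First I would exploit the projective normality of the ambient Veronese surface $V_{n-1}$, which is stated in the text. The key observation is that the restriction map $H^0(\OO_{\p^N}(j))\to H^0(\OO_X(j))$ factors through $H^0(\OO_{V_{n-1}}(j))$, namely as the composite
\begin{equation*}
H^0(\OO_{\p^N}(j))\to H^0(\OO_{V_{n-1}}(j))\to H^0(\OO_X(j)).
\end{equation*}
Since $V_{n-1}$ is PN, the first arrow is surjective for all $j\ge 0$, so it suffices to show the second arrow is surjective. Under the Veronese identification $H^0(\OO_{V_{n-1}}(j))\cong H^0(\OO_{\p^2}((n-1)j))$ and $H^0(\OO_X(j))\cong H^0(\OO_C((n-1)j))$, this second arrow becomes the plane restriction map $H^0(\OO_{\p^2}((n-1)j))\to H^0(\OO_C((n-1)j))$. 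Its surjectivity is controlled by $H^1(\II_{C/\p^2}((n-1)j))=H^1(\OO_{\p^2}((n-1)j-(2n+1)))$, using $C\in\abs{(2n+1)H}$. Because $\p^2$ has no intermediate cohomology, $H^1$ of any line bundle on $\p^2$ vanishes, so the restriction is surjective for every $j\ge 0$, and hence $X$ is $j$-normal for all $j$, i.e.\ $X$ is PN.

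\textbf{Arithmetically Gorenstein.} Given projective normality, $X$ is a PN curve, so by Proposition~\ref{prop:pn} it is automatically aCM, and it will be aG as soon as I check that $X$ is subcanonical in the sense of Definition~\ref{df:can}, i.e.\ $\omega_X\cong\OO_X(s)$ for some integer $s$. Here the line bundle $\OO_X(1)$ is $v_{n-1}^\star\OO_{\p^N}(1)=\OO_C((n-1)H|_C)$, while the adjunction computation~\eqref{eq:2n} gives $\omega_C\cong\OO_C(2(n-1)H|_C)$. Comparing, $\omega_X\cong\OO_X(2)$, so $X$ is $2$-subcanonical (half-canonical), and Proposition~\ref{prop:pn} then yields that $X$ is aG.

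The main obstacle, such as it is, lies in handling the two compatibility identifications carefully: matching the grading of $\OO_{V_{n-1}}(j)$ with $\OO_{\p^2}((n-1)j)$ under the Veronese map, and matching the hyperplane class of the embedding with the adjunction formula~\eqref{eq:2n} so that the subcanonical integer $s=2$ comes out correctly. Once these bookkeeping identifications are in place, everything reduces to the vanishing of $H^1$ of line bundles on $\p^2$ and to the already-proved Propositions~\ref{prop:pn} and the PN-ness of the Veronese surface, so no genuinely hard step remains.
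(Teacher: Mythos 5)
Your proposal is correct and takes essentially the same approach as the paper: both reduce projective normality of $X$ to projective normality of the Veronese surface $V_{n-1}$ together with the vanishing of $H^1$ of line bundles on $\p^2$ (your surjectivity-of-restriction-maps argument is the global-sections reformulation of the paper's ideal-sheaf diagram chase, since $j$-normality is exactly $h^1(\II_X(j))=0$), and both deduce the aG property from the adjunction identity $\omega_C\cong\OO_C(2(n-1)H|_C)$, i.e.\ $\omega_X\cong\OO_X(2)$.
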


\begin{proof} 
We need to show that 
$h^1(\II_X(j))=0$ for all $j\in \mathbb N$. 

By the above inclusions we can
construct the following exact diagram of sheaves:
\begin{equation*}
\begin{CD}
@.0@.@. @.\\
@.@AAA@. @.\\
@.\II_{X,V_{n-1}}(j)@.@. 0@. \\
@.@AAA@. @AAA\\
0 @>>> \II_X(j) @>>> \OO_{\p^N}(j) @> >> \OO_{X}(j) @>>> 0\\ 
@. @AAA @| @AAA\\
0 @>>> \II_{V_{n-1}}(j) @>>> \OO_{\p^N}(j) 
@>>> \OO_{V_{n-1}}(j) @>>> 0\\ 
@.@AAA@. @AAA\\
@.0@.@. \OO_{V_{n-1}}(j-X) \\
@.@.@. @AAA\\
@.@.@. 0
\end{CD}
\end{equation*}
By the above diagram, 
$\II_{X,V_{n-1}}(j)\cong \OO_{V_{n-1}}(j-X)$ and, since $V_{n-1}$ is PN, then $h^1(\II_{V_{n-1}}(j))=0$ 
for all $j\in \mathbb N$; 
therefore it is sufficient to show that $h^1(\OO_{V_{n-1}}(j-X))=0$ for all $j\in \mathbb N$. 

Since 
$v_{n-1}\colon \p^2
\to V_{n-1}\subset \p^N$ 
is an embedding, we are
reduced to show only that $h^1(\OO_{\p^2}(j(n-1)-2n-1))=0$ for all $j\in \mathbb N$, which is well-known (see 
\cite[III.5.1]{H}).

Moreover, by 
\eqref{eq:2n}, $X$ is also aG. 
\end{proof}

By what we have just proven, we have that 

\begin{prop}\label{cuginettoo} Let $C$ be a plane curve of degree $2n+1$. Let $A$ be the
corresponding Artinian graded Gorenstein ring of the half-canonical curve $X:=v_{n-1}(C)$. 
Then the Macaulay polynomial of $A$ is a quartic of Waring number at most $(n-1)^2$. 
\end{prop}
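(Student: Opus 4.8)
The plan is to read off the degree of the Macaulay polynomial from the socle degree, and then to bound the Waring number by producing a concrete apolar set of points coming from a linear section of the Veronese surface. First I would record that $v_{n-1}$ multiplies the hyperplane class of $\p^2$ by $n-1$, so the polarisation $\cL=\OO_X(1)$ restricts to $(n-1)H{\mid_C}$ on $C$; by the adjunction computation \eqref{eq:2n} we then get $\cL^{\otimes 2}\cong\omega_C$, i.e.\ $X$ is half-canonical (the case $s=2$). Since $X$ is PN by the preceding proposition, Proposition~\ref{prop:4} applies and shows that $A=\frac{S_X}{\gen{\eta_1,\eta_2}}$ is Artinian Gorenstein of socle degree $4$. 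By the Macaulay Lemma its associated form $F$ is therefore a quartic, which settles the first assertion.

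Next I would exhibit the apolar points. Writing $\Lambda:=V(\eta_1,\eta_2)$, a general linear subspace of codimension $2$, the ring $A$ is by construction the quotient of the coordinate ring of $\Lambda$ by the image $\overline{I_X}$ of the homogeneous ideal of $X$; identifying $A$ with $A^F$ via the Macaulay Lemma, this means $F^\perp=\overline{I_X}$. Because $X\subset V_{n-1}$ we have $I_{V_{n-1}}\subseteq I_X$, and hence $\overline{I_{V_{n-1}}}\subseteq\overline{I_X}=F^\perp$. The scheme cut out on $\Lambda$ by $\overline{I_{V_{n-1}}}$ is the linear section $\Gamma:=V_{n-1}\cap\Lambda$.

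Then I would analyse $\Gamma$. Since $V_{n-1}$ is a surface of degree $(n-1)^2$ and $\Lambda$ has codimension $2$, for general $\eta_1,\eta_2$ a Bertini-type argument gives that $\Gamma$ consists of exactly $(n-1)^2$ distinct reduced points. The point where care is needed---and what I expect to be the main obstacle---is to verify that $\overline{I_{V_{n-1}}}$ is the \emph{full saturated} ideal $\II(\Gamma)$ of these points inside $\Lambda$, that is, that the ideal of the linear section is generated by the restrictions of the equations of $V_{n-1}$. This is exactly where the projective normality (indeed the aCM-ness) of the Veronese surface enters: a general codimension-$2$ linear section of an aCM scheme is again aCM, and its homogeneous ideal is the restriction of the ambient ideal, so $\II(\Gamma)=\overline{I_{V_{n-1}}}$.

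Combining these steps gives $\II(\Gamma)=\overline{I_{V_{n-1}}}\subseteq F^\perp$, so $\Gamma$ is apolar to $F$. Applying the Apolarity Lemma~\ref{lem:apo} to the reduced set $\Gamma$ of $(n-1)^2$ points then yields a decomposition of the corresponding quartic $f$ as a sum of at most $(n-1)^2$ fourth powers of linear forms, whence the Waring number of $F$ is at most $(n-1)^2$, as claimed.
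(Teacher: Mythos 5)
Your proposal is correct and takes essentially the same route as the paper: the paper states this proposition as an immediate consequence (``By what we have just proven\dots'') of the preceding PN/aG proposition, Proposition~\ref{prop:4}, and the Apolarity Lemma~\ref{lem:apo} applied to the general codimension-two linear section of the Veronese surface $V_{n-1}$ --- exactly the combination you spell out, and the same one used later in the proof of Theorem~\ref{miofratello}. Your explicit verification that aCM-ness of $V_{n-1}$ makes the restricted ideal $\overline{I_{V_{n-1}}}$ saturated (so that it really is $\II(\Gamma)$) is a detail the paper leaves implicit, but it is the intended justification.
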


It is now obvious how to obtain similar results for $s$-subcanonical curves coming from plane curves: 

\begin{thm}\label{cuginetto} Let $C$ be a plane curve of degree $s(n-1)+3$. Let $A$ be the
corresponding Artinian graded Gorenstein ring of $s$-subcanonical curve $X:=v_{n-1}(C)$. 
Then the Macaulay polynomial of $A$ is an $(s+2)$-tic of Waring number at most $(n-1)^2$. 
\end{thm}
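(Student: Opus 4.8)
The plan is to mimic the structure of the half-canonical case treated in Proposition~\ref{cuginettoo}, which is itself the specialisation $s=2$ of the present statement. The key identity to establish is that for a smooth plane curve $C\subset\p^2$ of degree $s(n-1)+3$, the $(n-1)$-tuple Veronese image $X:=v_{n-1}(C)\subset V_{n-1}\subset\p^N$ is an $s$-subcanonical PN (hence, by Proposition~\ref{prop:pn}, aG) curve. First I would compute the dualising sheaf via adjunction on $\p^2$: writing the hyperplane divisor as $H$, we have $\omega_C=\OO_C((s(n-1)+3)-3)=\OO_C(s(n-1))=\OO_C((n-1)H)^{\otimes s}$, and since $v_{n-1}$ pulls back the hyperplane bundle of $\p^N$ to $\OO_{\p^2}((n-1)H)$, the restriction $\cL:=v_{n-1}^\star\OO_{\p^N}(1)$ satisfies $\cL^{\otimes s}\cong\omega_C$. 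This is exactly the $s$-subcanonical condition; note that for $s=2$ this recovers the exponent $2(n-1)$ appearing in~\eqref{eq:2n}.

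Next I would establish projective normality of $X$ by the same cohomological diagram-chase already used in the half-canonical proof. Since $V_{n-1}$ is PN of degree $(n-1)^2$, the vanishing $h^1(\II_{V_{n-1}}(j))=0$ for all $j\ge0$ holds, and the ideal sheaf of $X$ inside $V_{n-1}$ pulls back under the isomorphism $v_{n-1}\colon\p^2\xrightarrow{\sim}V_{n-1}$ to $\OO_{\p^2}(j(n-1)-(s(n-1)+3))$. Thus projective normality of $X$ reduces to $h^1(\OO_{\p^2}(j(n-1)-s(n-1)-3))=0$ for all $j\in\NN$, which is immediate from the standard computation of line-bundle cohomology on $\p^2$ (see~\cite[III.5.1]{H}), since $h^1$ of any line bundle on $\p^2$ vanishes identically. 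Being PN and subcanonical, $X$ is aG by Proposition~\ref{prop:pn}.

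With $X$ recognised as $s$-subcanonical, PN and aG, Theorem~\ref{thm:4s} applies: for two general linear forms $\eta_1,\eta_2$ the quotient $A=S_X/\gen{\eta_1,\eta_2}$ is Artinian Gorenstein of socle dimension one and socle degree $s+2$, so by the Macaulay Lemma it corresponds to a unique hypersurface $F$ of degree $s+2$, the Macaulay $(s+2)$-tic. Finally, to bound the Waring number, I would produce an explicit apolar $0$-dimensional scheme $\Gamma$ of length $(n-1)^2$ and invoke the Apolarity Lemma~\ref{lem:apo}: the Veronese surface $V_{n-1}$ has degree $(n-1)^2$, so a general codimension-two linear section of $\check\p^N$ meets the dual configuration in $(n-1)^2$ points, and apolarity of this scheme to $F$ yields a presentation of $f$ as a sum of at most $(n-1)^2$ powers of linear forms.

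\textbf{Main obstacle.} The two genuinely routine parts are the adjunction computation and the $\p^2$-cohomology vanishing; these are formally identical to the $s=2$ argument and carry over verbatim. The step requiring real care is the last one: exhibiting the length-$(n-1)^2$ apolar scheme and verifying its inclusion $\II(\Gamma)\subset F^\perp$ in a way that is uniform in $s$. I expect the correct bookkeeping to come precisely from the degree $(n-1)^2$ of the Veronese surface together with the geometry of the two-step linear projection cutting $A$ out of $S_X$, so that the general-section points furnish the required $\lambda_i\ell_i^{s+2}$ decomposition; making this apolarity inclusion explicit, rather than merely counting points, is where the argument must be pinned down.
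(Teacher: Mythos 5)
Your proposal follows the same route as the paper: indeed the paper gives no separate proof of this theorem, stating it as the evident generalisation of the half-canonical case (degree $2n+1$, Proposition~\ref{cuginettoo}), and your first four steps --- adjunction giving $\cL^{\otimes s}\cong\omega_C$, the diagram chase reducing projective normality of $X$ to $h^1(\OO_{\p^2}(j(n-1)-s(n-1)-3))=0$, the conclusion that $X$ is aG via Proposition~\ref{prop:pn}, and the appeal to Theorem~\ref{thm:4s} plus the Macaulay Lemma to get an $(s+2)$-tic --- carry this out correctly and are exactly the paper's (implicit) argument. The one place your writeup stops short of a proof is the step you flag yourself: verifying the inclusion $\II(\Gamma)\subset F^\perp$ for $\Gamma:=V_{n-1}\cap V(\eta_1,\eta_2)$.

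That inclusion is not delicate bookkeeping, though; it closes immediately from the fact that the Veronese surface is aCM, which is precisely the ingredient the paper invokes at the parallel step in the proof of Theorem~\ref{miofratellos} (``since $S_{a_1,a_2}$ is aCM''). Concretely: since $V_{n-1}$ is aCM and $\eta_1,\eta_2$ are general, they form a regular sequence on $S_{V_{n-1}}$ and the homogeneous coordinate ring of the linear section is the quotient, $S_\Gamma=S_{V_{n-1}}/\gen{\eta_1,\eta_2}$; equivalently, the homogeneous ideal of $\Gamma$ inside $\check\p^{N-2}=V(\eta_1,\eta_2)$ is the image of $\II(V_{n-1})$ under reduction modulo $(\eta_1,\eta_2)$. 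On the other hand, $X\subset V_{n-1}$ gives $\II(V_{n-1})\subset\II(X)$, and the image of $\II(X)$ modulo $(\eta_1,\eta_2)$ in $\CC[\de_0,\dotsc,\de_{N-2}]$ is exactly $F^\perp$: this is the Macaulay identification $A=S_X/\gen{\eta_1,\eta_2}=\CC[\de_0,\dotsc,\de_{N-2}]/F^\perp$ that defines the Macaulay polynomial $F$ in the first place, legitimate here because Theorem~\ref{thm:4s} gives $A$ Artinian Gorenstein of socle dimension one. Combining, $\II(\Gamma)\subset F^\perp$. Finally, by Bertini the general codimension-two linear section of the smooth surface $V_{n-1}$ is a reduced set of $\deg V_{n-1}=(n-1)^2$ distinct points, so the Apolarity Lemma~\ref{lem:apo} applies verbatim and expresses $f$ as a sum of at most $(n-1)^2$ powers of linear forms. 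With these two lines added, your argument is complete and coincides with the paper's.
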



\subsection{Curves on Segre-Hirzebruch surfaces}

For fixing some notation, we recall some basic facts about rational normal scrolls.

\subsubsection{Rational normal scrolls}
By definition, a \emph{rational normal scroll} (RNS for short, in the following) 
of type $(a_1,\dotsc,a_k)$, $S_{a_1,\dotsc,a_k}$, is the 
image of the $\p^{k-1}$-bundle $\p(E)=\p(\OO_{\p^1}(a_1)\oplus\dotsb\oplus\OO_{\p^1}(a_k))$, via 
the embedding given by $\OO_{\p(E)}(1)$ in $\p^N$, $N=\sum a_i+k-1$. Equivalently, one takes $k$ disjoint 
projective spaces of dimension $a_i$, $\p^{a_i}$, and $k$ rational normal curves $C_i\subset\p^{a_i}$, 
together with isomorphisms $\phi_i\colon\p^1\to C_i$ (if $a_i\neq 0$, constant maps otherwise); 
then
\begin{equation*}
S_{a_1,\dotsc,a_k}=\bigcup_{P\in\p^1}\gen{\phi_1(P),\dotsc,\phi_k(P)}.
\end{equation*}
We have also that
\begin{align*}
\deg (S_{a_1,\dotsc,a_k}) &=\sum a_i\\
&=N-k+1,
\end{align*}
and $\dim (S_{a_1,\dotsc,a_k})=k$. 

\subsubsection{Rational normal surfaces}

In the case $k=2$, $\p^{a_1+a_2+1}\supset S_{a_1,a_2}\cong \FF_e$, $e:=\abs{a_1-a_2}$ where
$\FF_e$ is its minimal model, 
\ie a Segre-Hirzebruch surface: $\FF_e=\p(\OO_{\p^1}\oplus
\OO_{\p^1}(e))$. In fact, letting $a_{1}\geq a_{2}$, $S_{a_1,a_2}$ is the embedding of $\FF_e$ by the complete linear system 
$\abs{C_0+a_1f}$ where the \emph{zero section} 
$C_0$ on $\FF_e$ is defined by 
$\OO_{\FF_e}(C_{0})=\OO_{\FF_e}(1)$ and $f$ is the fibre of the 
scroll (see for example \cite[V.2]{H}). Moreover, we have 
$C_0\cdot C_0 = -e$, $f\cdot f =0$, $C_0\cdot f =1$, 
and the Picard group of  $\FF_e$ is generated by $C_0$ and $f$. Geometrically
 $C_0$ corresponds in $S_{a_{1}a_{2}}\subset \p^{a_1+a_2+1}$ to the unisecant 
$a_2$-tic $C_{a_2}$, 
while the section $C_1\sim C_0+cf$ (where $\sim$ denotes the linear equivalence of 
divisors) corresponds to the unisecant $a_1$-tic curve. 
We recall moreover that: 
\begin{equation*}
K_{S_{a_1,a_2}}\sim -2C_0+(-2-e)f.
\end{equation*}
Finally, we prove the following

\begin{lem}\label{lem:pic}
Let $C\subset S_{a_1,a_2}\subset\p^{a_1+a_2+1}$ be a curve which is a very ample divisor, \ie $[C]\sim aC_0+bf$ with 
$a>0$ and $b>ae$ (see \cite[Corollary V.2.18]{H}). Then the natural restriction map $\pic(S_{a_1+a_2})\to \pic(C)$ 
restricted to the very ample divisors on $S_{a_1,a_2}$ is injective. 
\end{lem}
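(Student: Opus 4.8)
The plan is to use that the restriction is a homomorphism of abelian groups $\pic(\FF_e)=\Z C_0\oplus\Z f\to\pic(C)$ whose source carries the unimodular intersection form with $C_0^2=-e$, $C_0\cdot f=1$, $f^2=0$ (determinant $-1$), so that a class is determined by its intersection numbers with $C_0$ and $f$. Since the very ample classes do not form a subgroup, I first reduce the assertion to a kernel computation: given very ample $D,D'$ with $\OO_C(D)\cong\OO_C(D')$, set $M:=D-D'\in\pic(\FF_e)$, so that it suffices to prove $\OO_C(M)\cong\OO_C\Rightarrow M\sim 0$. I will in fact aim at the stronger statement that the restriction map has trivial kernel.

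Writing $M=mC_0+nf$, taking degrees gives $M\cdot C=m(b-ae)+na=0$. Because $C$ is very ample, hence ample, every nonzero effective class meets $C$ positively; as $M\cdot C=0$, neither $M$ nor $-M$ is effective unless $M\sim0$, so assuming $M\not\sim0$ we get $H^0(\FF_e,\OO(M))=H^0(\FF_e,\OO(-M))=0$, and the Hodge index theorem (using $C^2=a(2b-ae)>0$ and $M\cdot C=0$) gives $M^2<0$; thus $M$ lies in the rank-one sublattice $C^{\perp}$, so $M=kv$ for the primitive generator $v$ of $C^{\perp}$. Feeding this into the restriction sequence $0\to\OO(M-C)\to\OO(M)\to\OO_C(M)\to0$ with $\OO_C(M)\cong\OO_C$: the nowhere-vanishing section of $\OO_C$ cannot lift since $H^0(\OO(M))=0$, so the connecting map forces $H^1(\FF_e,\OO(M-C))\neq0$, and symmetrically $H^1(\FF_e,\OO(-M-C))\neq0$. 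These groups can be computed explicitly by pushing forward along the ruling $\pi\colon\FF_e\to\p^1$, using $\pi_*\OO(pC_0+qf)=\bigoplus_{i=0}^{p}\OO_{\p^1}(q-ie)$ for $p\ge0$ together with the Serre-dual description for $p\le-2$, which isolates exactly when the $H^1$ of a line bundle on $\FF_e$ is nonzero.

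The main obstacle is this last step. The effectivity and Hodge-index part above only controls the free rank-two behaviour, i.e.\ it pins $M$ down to a multiple $kv$ with $\OO_C(kv)\cong\OO_C$; the genuine difficulty is to rule out a \emph{torsion} relation, that is, to show $\OO_C(v)$ is non-torsion in $\pic(C)$ so that $k=0$. This is precisely where the positivity hypotheses $a>0$ and $b>ae$ must be used, through the incompatibility of the two nonvanishing conditions on $M-C$ and $-M-C$; I expect the careful bookkeeping of the $\pi_*$-cohomology regions, under $M\cdot C=0$ and the very-ampleness inequalities, to be the delicate heart of the argument, since for insufficiently positive (low genus) curves the restriction of $v$ can be torsion and injectivity would genuinely fail.
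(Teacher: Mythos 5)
Your reduction to $M:=D-D'\in C^{\perp}$ and the two necessary conditions $H^1(\FF_e,\OO(M-C))\neq0$, $H^1(\FF_e,\OO(-M-C))\neq0$ are correct, but the step you explicitly defer --- showing these two nonvanishings are incompatible, \ie ruling out $M=kv$ with $k\neq0$ --- is where the argument breaks down, and it cannot be repaired: the Lemma is false in the stated generality, exactly as your closing caveat suspects. Two counterexamples. (i) On the quadric $S_{1,1}\subset\p^3$ take $C\sim C_0+f$ a hyperplane section (so $a=1>0$, $b=1>ae=0$); then $C\cong\p^1$, $\pic(C)\cong\Z$, and the very ample classes $D_1=C_0+2f$ and $D_2=2C_0+f$ both restrict to $\OO_{\p^1}(3)$. (ii) Torsion genuinely occurs in positive genus: on $\FF_0=\p^1\times\p^1$ choose degree-two line bundles $L_1\neq L_2$ on an elliptic curve $E$ with $L_1-L_2$ a nonzero $2$-torsion class; the map $(\phi_{\abs{L_1}},\phi_{\abs{L_2}})\colon E\to\FF_0$ is an embedding (it separates points and tangent vectors precisely because $L_1\neq L_2$), its image is a very ample curve of type $(2,2)$, and $M=2C_0-2f$ restricts to $\OO_E$, with $D_1=3C_0+f$, $D_2=C_0+3f$ very ample and $D_1-D_2=M$. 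In this example $M-C=-4f$ and $-M-C=-4C_0$ both have nonzero $H^1$, so your two necessary conditions are simultaneously satisfiable and no contradiction can come out of the $\pi_*$ bookkeeping. The gap is thus not a missing computation but a missing hypothesis.

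For comparison, the paper's own proof fares worse: it retains from $D_1\mid_C\sim D_2\mid_C$ only the numerical condition $D_1\cdot C=D_2\cdot C$, parametrises its solutions by $\lambda$, and asserts that $\lambda\neq0$ contradicts the very ampleness inequalities $\alpha_i>0$, $\beta_i>\alpha_ie$; it does not ($\alpha_1<\alpha_2$ together with $\beta_1-\alpha_1e>\beta_2-\alpha_2e$ is perfectly compatible with those inequalities --- both examples above realise this sign pattern), and no purely numerical argument could ever detect the torsion phenomenon you isolated. What the paper actually needs, in Propositions~\ref{prop:4gona} and~\ref{prop:4gonas}, is injectivity only at the single pair $K_{S}+C$ versus $sH$, where $H:=C_0+a_1f$, and there your exact-sequence strategy does close, because the relevant $H^1$ is computable: set $M:=K_S+C-sH$ and suppose $\OO_C(M)\cong\OO_C$; then $M\cdot C=0$, and since $M-C=K_S-sH$,
\begin{equation*}
H^1(\OO_S(M-C))\cong H^1(\OO_S(sH))^{\vee}=0
\end{equation*}
by Serre duality and the vanishing of the intermediate cohomology of the aCM scroll $S$ (Proposition~\ref{prop:pn}); hence $1\in H^0(\OO_C)$ lifts to a nonzero section of $\OO_S(M)$, so $M$ is effective, and $M\cdot C=0$ with $C$ very ample forces $M=0$, which is exactly the identification of $a$ and $b$ used in those propositions. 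So your approach, specialised to the divisor pair that is actually needed, proves the corrected statement, whereas the paper's numerical route cannot.
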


\begin{proof}
Let $D_1,D_2\in\pic(S_{a_1,a_2})$ be two very ample divisors; then, we can write
\begin{equation}\label{va}
D_i=\alpha_iC_0+\beta_i f,\quad \alpha_1>0,\ \beta_i>\alpha_ie,\quad i=1,2. 
\end{equation}
Let us suppose that $D_1\mid_C\sim D_2\mid_C$; in particular 
$D_1C=D_2C$, which means 
$a((\beta_1-\alpha_1e)-(\beta_2-\alpha_2e))+b(\alpha_1-\alpha_2)=0$. 
This is equivalent to
\begin{align*}
\alpha_1-\alpha_2&=-\lambda a\\
(\beta_1-\alpha_1e)-(\beta_2-\alpha_2e)&=\lambda b
\end{align*}
with $\lambda\in \CC$. If $\lambda=0$, we are done. If $\lambda> 0$, then $\alpha_1<\alpha_2$ and $\beta_1-\alpha_1e>\beta_2-\alpha_2e$, 
but this contradicts \eqref{va}; analogously if $\lambda< 0$. 
\end{proof}

\subsubsection{Half-canonical curves}
Let us consider now curves contained in rational normal surfaces; we
look for conditions 
for these curves to be half-canonical. We will show that these are characterised to be 
\emph{$4$-gonal}. We start with the following:

\begin{prop}\label{prop:4gona}
If a curve $C\subset S_{a_1,a_2}\subset\p^{a_1+a_2+1}$ (with $a_1\ge a_2$) is half-canonical, then it is $4$-gonal. 
More precisely, $C$ is linearly equivalent to $4C_0+(3a_1-a_2+2) f$, and therefore $p_a(C)=3(a_1+a_2+1)$ and 
$\deg(C)=3a_1+3a_2+2$. $C$ is smooth if (and only if) $3a_2+2\ge a_1$.
\end{prop}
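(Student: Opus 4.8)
The plan is to pin down the class of $C$ in $\pic(\FF_e)$, where $\FF_e$ (with $e:=a_1-a_2\ge 0$) is the minimal model of $S:=S_{a_1,a_2}$, and then to read off every numerical invariant by intersection theory on $S$. Write $[C]\sim aC_0+bf$ with $a>0$ and $b>ae$, the last two inequalities holding because $C$ is a very ample divisor on $S$ (the criterion recalled just before Lemma~\ref{lem:pic}). I denote by $H:=C_0+a_1f$ the hyperplane class of the scroll, so that $\OO_C(1)=H|_C$.

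The key input is the half-canonical hypothesis combined with adjunction on the surface. By adjunction $\omega_C=(K_S+C)|_C$, while being half-canonical means $\omega_C=\OO_C(2)=(2H)|_C$. Hence $(K_S+C)|_C\sim(2H)|_C$, that is $(K_S+C-2H)|_C\sim 0$. I would now upgrade this linear equivalence on $C$ to a linear equivalence on the whole surface by means of Lemma~\ref{lem:pic}. Since that lemma applies to \emph{very ample} classes, and $K_S+C$ need not be very ample a priori, the device is to twist by $H$: for $m\gg 0$ both $(2+m)H$ and $K_S+C+mH$ are very ample on $\FF_e$ (indeed the $C_0$-coefficient of $K_S+C+mH$ grows like $m$, and the quantity ($f$-coefficient)$-e\cdot$($C_0$-coefficient) equals $b-2+e-ea+m\,a_2$, which tends to $+\infty$ because $a_2=a_1-e\ge 1$). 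These two very ample classes restrict to the same class on $C$, so Lemma~\ref{lem:pic} forces $K_S+C+mH\sim(2+m)H$, whence $K_S+C\sim 2H$ on $S$.

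Using $K_S\sim-2C_0+(-2-e)f$ and equating coefficients in the free group $\pic(\FF_e)=\Z C_0\oplus\Z f$ gives $a-2=2$ and $b-2-e=2a_1$, i.e. $a=4$ and $b=2+e+2a_1=3a_1-a_2+2$; thus $C\sim 4C_0+(3a_1-a_2+2)f$, as claimed. The remaining assertions are then pure bookkeeping with $C_0^2=-e$, $C_0\cdot f=1$, $f^2=0$: one finds $\deg C=H\cdot C=3a_1+3a_2+2$, and from $2p_a(C)-2=C\cdot(C+K_S)=\deg\bigl((2H)|_C\bigr)=2\deg C$ one gets $p_a(C)=3(a_1+a_2+1)$. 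Finally, since $C\cdot f=4$, the ruling $|f|$ restricts to a base-point-free $g^1_4$ on $C$, exhibiting $C$ as $4$-gonal.

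For the smoothness criterion I would simply intersect $C$ with the negative section: $C\cdot C_0=b-ae=3a_2+2-a_1$. If this is non-negative, i.e. $3a_2+2\ge a_1$, then $b\ge ae$ with $a=4\ge 0$, so the complete linear system $|4C_0+bf|$ is base-point-free on $\FF_e$, and by Bertini its general member is smooth and irreducible, so $C$ may be taken smooth. Conversely, if $3a_2+2<a_1$ then $C\cdot C_0<0$, which forces the negative section $C_0$ to be a fixed component of $|C|$, so no member of the class is a smooth irreducible curve. I expect the only genuinely delicate point to be the passage from ``$\sim$ on $C$'' to ``$\sim$ on $S$''; the twisting trick above is what makes Lemma~\ref{lem:pic} applicable, and everything else is a direct computation with the intersection form.
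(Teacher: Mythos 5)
Your proof is correct and follows essentially the same route as the paper's: write $C\sim aC_0+bf$, use adjunction to get $(K_S+C)|_C\sim(2H)|_C$, promote this to a linear equivalence on $S$ via Lemma~\ref{lem:pic}, equate coefficients to obtain $a=4$, $b=3a_1-a_2+2$, and then read off degree, genus and the smoothness condition; note that both you and the paper must tacitly assume that $C$ is a very ample divisor class on $S$ (i.e.\ $a>0$, $b>ae$) for Lemma~\ref{lem:pic} to be invoked at all.

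The one point where you genuinely go beyond the paper is the twisting device. The paper applies Lemma~\ref{lem:pic} directly to the pair $2H$ and $K_S+C$, even though at that stage nothing guarantees that $K_S+C$ is very ample, whereas the lemma as stated only concerns very ample classes; your replacement of the pair by $(2+m)H$ and $K_S+C+mH$ for $m\gg0$ (which works because $a_2\ge1$, so the relevant quantity grows with $m$) closes this gap cleanly. You also make explicit two points the paper disposes of by citation or omission: the $4$-gonality, which you derive from $C\cdot f=4$ so that the ruling cuts out a $g^1_4$, and the smoothness criterion $3a_2+2\ge a_1$, which you obtain from base-point-freeness plus Bertini together with the observation that $C\cdot C_0<0$ would force $C_0$ to be a fixed component (the paper simply quotes \cite[V.2.18]{H}). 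So the skeleton is the same, but your write-up repairs a small lacuna in the published argument.
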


\begin{proof}
A curve $C\subset S_{a_1,a_2}$ determines a divisor $C\sim aC_0+bf$. 
By adjunction, we have $K_C=((a-2)C_0+(b-2-e)f)\mid_C$, 
so $C$ is half-canonical with respect to the linear
system $\abs{C_{0}+a_{1}f}$ if it holds that, by Lemma~\ref{lem:pic}: 
\begin{equation*}
2C_0+2a_1f=((a-2)C_0+(b-2-e)f).
\end{equation*}
This is equivalent to write: $a=4$, $b=3a_1-a_2+2$.
Since we want that $C$ is smooth, by \cite[V.2.18]{H} we must have 
$3a_1-a_2+2\ge 4(a_1-a_2)$, that is $3a_2+2\ge a_1$. Then 
the (arithmetic) genus $p_a(C)$ of $C$ is $p_a(C)=3(a_1+a_2+1)$, 
and its degree is $\deg(C)=3a_1+3a_2+2$. 

\end{proof}

Next, we show that the smooth curves of the preceding proposition are PN: 

\begin{prop}\label{miazia} 
Let $a_{1},a_{2}\in\mathbb N$ such that
$a_{2} \leq a_{1}\leq 3a_{2}+2$. Let $C\subset \FF_e$ be a
general member of the linear system $\abs{4C_{0}+(3a_1-a_2+2)f}$ where
$e=a_{1}-a_{2}\geq 0$. 
Then the image $X$ of $C$ by the embedding $\phi_{\abs{C_{0}+a_{1}f}}\colon \FF_e
\rightarrow S_{ a_{1},a_{2}}\subset\p^{1+a_{1}+a_{2}}$ is PN.
\end{prop}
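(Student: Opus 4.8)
The plan is to adapt the argument of the proposition preceding Proposition~\ref{cuginettoo}, replacing the Veronese surface by the rational normal surface $S:=S_{a_1,a_2}$, which is PN by construction, so that $h^1(\II_S(j))=0$ for all $j$. Write $N=1+a_1+a_2$ and let $H:=C_0+a_1f$ be the hyperplane class of $\phi_{\abs{C_0+a_1f}}$. Since $X$ is an effective Cartier divisor on the smooth surface $S\cong\FF_e$, we have $\II_{X,S}\cong\OO_S(-X)$, whence $\II_{X,S}(j)\cong\OO_S(jH-X)$; moreover the $H^1$ of this sheaf, computed on $\p^N$ via the closed immersion $S\hookrightarrow\p^N$, agrees with its $H^1$ on $S$. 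The short exact sequence $0\to\II_S(j)\to\II_X(j)\to\II_{X,S}(j)\to 0$ then shows, using $h^1(\II_S(j))=0$, that proving $X$ is PN (\ie $h^1(\II_X(j))=0$ for all $j\ge 0$) reduces to the vanishing
\[
h^1\big(\OO_S(jH-X)\big)=0\qquad\text{for all }j\ge 0.
\]

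To compute this I would pass to $\FF_e$. By Proposition~\ref{prop:4gona}, $X\sim 4C_0+(3a_1-a_2+2)f$, so
\[
jH-X\sim (j-4)C_0+\big((j-3)a_1+a_2-2\big)f=:aC_0+bf,\qquad a=j-4.
\]
I would then evaluate $h^1(\OO_{\FF_e}(aC_0+bf))$ by pushing forward along the ruling $\pi\colon\FF_e\to\p^1$ and applying the Leray spectral sequence, using the standard description of $R^q\pi_*\OO_{\FF_e}(aC_0)$ (see \cite[V.2]{H}). For $a\ge 0$ one has $\pi_*\OO_{\FF_e}(aC_0+bf)=\bigoplus_{i=0}^a\OO_{\p^1}(b-ie)$ and $R^1\pi_*=0$, so $h^1=0$ exactly when $b-ae\ge -1$; for $a=-1$ both direct images vanish and $h^1=0$ automatically; and for $a\le -2$ I would invoke Serre duality on $S$, with $K_S\sim -2C_0+(-2-e)f$, to reduce to an $a':=-2-a\ge 0$ computation.

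The arithmetic is uniform: the decisive quantity is $b-ae=a_1+(j-3)a_2-2$. For $j\ge 4$ (so $a\ge 0$) it is at least $a_1+a_2-2\ge -1$, since $\phi$ being an embedding forces $a_1\ge a_2\ge 1$; for $j=3$ ($a=-1$) the vanishing is free; and for $j\le 2$ ($a\le -2$) Serre duality turns the criterion $b'-a'e\ge -1$ into $a_2(j-2)\le 1$, which holds for $j=0,1,2$ because $a_2\ge 0$. Hence $h^1(\OO_S(jH-X))=0$ for every $j\ge 0$, proving that $X$ is PN.

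It remains to record where the hypothesis $a_1\le 3a_2+2$ enters. Taking $a=4$, $b=3a_1-a_2+2$ gives $b-ae=3a_2-a_1+2$, so this hypothesis is precisely the condition $b-ae\ge 0$, \ie base-point-freeness of $\abs{4C_0+(3a_1-a_2+2)f}$; by Bertini a general member $C$ is then smooth and irreducible, so that $X=\phi(C)$ is a genuine smooth curve to which the above applies. The main obstacle is the case $a\le -2$, where one must apply Serre duality correctly and keep track of the numerics; everything else is a formal reduction, via the ideal-sheaf diagram and the projective normality of $S$, to line-bundle cohomology on $\FF_e$.
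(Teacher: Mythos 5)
Your proposal is correct, and its skeleton is the same as the paper's: the ideal-sheaf diagram together with the projective normality of $S_{a_1,a_2}$ reduces the claim to the vanishing $h^1(\OO_{\FF_e}(j(C_0+a_1f)-C))=0$ for all $j\ge 0$, exactly as in the paper. Where you genuinely diverge is in how that vanishing is established. The paper rewrites $j(C_0+a_1f)-C\sim K_{\FF_e}+(j-2)(C_0+a_1f)$ and argues case by case: Kodaira vanishing for $j\ge 3$, rationality of $\FF_e$ for $j=2$, Serre duality plus restriction to a general hyperplane section $H\in\abs{C_0+a_1f}$ for $j=1$, and Kodaira again for $j=0$. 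You instead push forward along the ruling $\pi\colon\FF_e\to\p^1$ and apply Leray, getting the uniform numerical criterion $b-ae\ge -1$ for $a\ge 0$ (correct, since $e\ge 0$ makes $i=a$ the worst summand of $\bigoplus_{i=0}^{a}\OO_{\p^1}(b-ie)$), with Serre duality disposing of $a\le -2$; your arithmetic checks out, namely $b-ae=a_1+(j-3)a_2-2\ge a_1+a_2-2\ge 0$ for $j\ge 4$, and $(2-j)a_2\ge -1$ after dualising for $j\le 2$, where the needed inequality $a_2\ge 1$ is, as you say, forced by very ampleness of $C_0+a_1f$. Your route is more elementary (no vanishing theorem beyond Serre duality) and in fact more robust at the boundary of the hypotheses: when $a_1=3a_2+2$ the curve $C$ satisfies $C\cdot C_0=0$, so it is nef and big but not ample, and the paper's appeal to Kodaira for $h^1(\OO_{\FF_e}(-C))=0$ at $j=0$ strictly speaking needs Kawamata--Viehweg (or a computation like yours), whereas your criterion handles it at once. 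What the paper's argument buys is brevity in the middle range of $j$ and a template that it reuses verbatim, via induction on restrictions to hyperplane sections, in the $s$-subcanonical analogue (Proposition~\ref{miazias}); note that your method also generalises there with no extra effort, since for $C\sim(s+2)C_0+((s+1)a_1-a_2+2)f$ one gets $a=j-(s+2)$ and $b-ae=a_1+(j-s-1)a_2-2$, and the same three-way case split goes through.
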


\begin{proof} Set $N:=1+a_{1}+a_{2}$, $S=S_{ a_{1},a_{2}}\subset
\mathbb P^N$ and $X= \phi_{\abs{C_{0}+a_{1}f}}(C)$. We only need to show that 
$h^1(\II_X(j))=0$ for all $j\in \mathbb N$. 

By the natural inclusions $X\subset S\subset\p^N$ we can
construct the following exact diagram of sheaves:

\begin{equation}\label{d:3}
\begin{CD}
@.0@.@. @.\\
@.@AAA@. @.\\
@.\II_{X,S}(j)@.@. 0@. \\
@.@AAA@. @AAA\\
0 @>>> \II_X(j) @>>> \OO_{\p^N}(j) @> >> \OO_{X}(j) @>>> 0\\ 
@. @AAA @| @AAA\\
0 @>>> \II_{S}(j) @>>> \OO_{\p^N}(j) 
@>>> \OO_{S}(j) @>>> 0\\ 
@.@AAA@. @AAA\\
@.0@.@. \OO_{S}(j-X) \\
@.@.@. @AAA\\
@.@.@. 0
\end{CD}
\end{equation}
By the above diagram, 
$\II_{X,S}(j)\cong \OO_{S}(j-X)$ and, since $S$ is PN, then $h^1(\II_S(j))=0$ for all $j\in \mathbb N$; 
therefore it is sufficient to show that $h^1(\OO_S(j-X))=0$ for all $j\in \mathbb N$. 

Since 
$\phi_{\abs{C_{0}+a_{1}f}}\colon \FF_e
\to S\subset \p^N$ 
is an embedding, we are
reduced to show only that $h^1(\OO_{\FF_{e}}(j(C_{0}+a_{1}f)-C))=0$ for all $j\in \mathbb N$.

Since
\begin{equation*}
C\in \abs{4C_{0}+(3a_1-a_2+2)f}\quad \textup{and}\quad K_{\FF_{e}}\sim
-2C_0+(-2-e)f, 
\end{equation*}
then 
\begin{equation*}
j(C_{0}+a_{1}f)-C\sim 
K_{\FF_{e}}+(j-2)(C_{0}+a_{1}f). 
\end{equation*}

If $j\geq 3$, then
\begin{align*}
H^1(\OO_{\FF_{e}}(j(C_{0}+a_{1}f)-C))&=H^1(\OO_{\FF_{e}}(K_{\FF_{e}}+(j-2)(C_{0}+a_{1}f)))\\
&=\{0\}
\end{align*}
by the Kodaira Vanishing theorem \cite[III.7.15]{H}. 

If $j=2$, then $h^1(\OO_{\FF_{e}}(K_{\FF_{e}}))=0$ 
since $\FF_{e}$ is a rational surface.

If $j=1$, 
by Serre duality 
\begin{equation*}
H^1(\OO_{\FF_{e}}(K_{\FF_{e}}-(C_{0}+a_{1}f)))\cong H^1(\OO_{\FF_{e}}(C_{0}+a_{1}f)))^{\vee},
\end{equation*}
so we only need to show that $h^1(\OO_{\FF_{e}}(C_{0}+a_{1}f))=0$.
Now, the general member $H\in \abs{C_{0}+a_{1}f}$ is a connected smooth
rational curve and its degree is $a_{1}+a_{2}$, since $H$ corresponds to a hyperplane section of $S$. 
Then $h^{1}(H, \OO_{H}(C_{0}+a_{1}f))=0$ by Serre duality on the curve $H$. 

By the cohomology of 
\begin{equation*}
0\to \OO_{\FF_{e}} \to \OO_{\FF_{e}}(C_{0}+a_{1}f) \to
\OO_{H}(C_{0}+a_{1}f) \to 0, 
\end{equation*}
since $h^1(\OO_{\FF_{e}})=0$ ($\FF_{e}$ is a rational, hence regular, surface) and $h^{1}(H, \OO_{H}(C_{0}+a_{1}f))=0$, 
it follows that $h^1\OO_{\FF_{e}}(C_{0}+a_{1}f)=0$, which is the claim.

Finally, if $j=0$, we have $h^1\OO_{\FF_{e}}(-C)=0$ by Kodaira Vanishing. 

\end{proof}


In the hypothesis of Proposition \ref{miazia}, we can give the converse of Proposition~\ref{prop:4gona}, and therefore 
we can characterise the PN $4$-gonal half-canonical curves, in the following 

\begin{thm}\label{miofratello} 
Let $(C,\cL)$ be a polarised curve, such that $C\subset\abs{\cL}^\vee=:\check\p^N$ is a half-canonical PN curve. 
Then $C$ is $4$-gonal 
if and only if it is contained in a rational normal surface $S_{a_1,a_2}$ with $a_{2} \leq a_{1}\leq 3a_{2}+2$ and $N:=a_1+a_2+1$, if and only if 
for every
couple of general sections
$\eta_1, \eta_2\in H^{0}(C,\cL)$, $F_{\eta_{1},\eta_{2}}\in \mathbb C[x_{0},\dotsc, x_{a_{1}+a_{2}-1}]$ 
is a Fermat quartic. 
\end{thm}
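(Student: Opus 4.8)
\begin{enumerate}
\item[(A)] $C$ is $4$-gonal;
\item[(B)] $C$ lies on a rational normal surface $S_{a_1,a_2}$ with $a_2\le a_1\le 3a_2+2$, $N=a_1+a_2+1$;
\item[(C)] $F_{\eta_1,\eta_2}$ is a Fermat quartic for general $\eta_1,\eta_2$.
\end{enumerate}

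I would organise the argument as a cycle, establishing (B)$\Rightarrow$(A), (A)$\Rightarrow$(B), and then (B)$\Leftrightarrow$(C), since the geometric surface is the natural pivot connecting the gonality to the algebra. The implication (B)$\Rightarrow$(A) is essentially already done: if $C$ lies on $S_{a_1,a_2}$ and is half-canonical and PN, then by Proposition~\ref{miazia} the setup is consistent and by the computation in Proposition~\ref{prop:4gona} we have $C\sim 4C_0+(3a_1-a_2+2)f$, so the fibres $f$ of the scroll cut out a $g^1_4$ on $C$ (since $C\cdot f=4$), exhibiting $C$ as $4$-gonal.

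\emph{The converse (A)$\Rightarrow$(B)} is where the real geometric content lies, and I expect it to be the main obstacle. Starting from a $4$-gonal half-canonical PN curve $C\subset\check\p^N$, I would take the $g^1_4$ and use it to build a scroll: the planes spanned by the divisors of the pencil sweep out a rational normal surface (a threefold scroll a priori, but the subcanonical/dimension constraints should force a surface $S_{a_1,a_2}$) containing $C$. The classical mechanism here is that of Enriques--Petri in the trigonal case, where the trigonal pencil produces the scroll on which the canonical curve sits; I would adapt that argument, using projective normality to ensure the scroll is a \emph{rational normal} surface and the half-canonical condition to pin down its type via Proposition~\ref{prop:4gona}. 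The inequalities $a_2\le a_1\le 3a_2+2$ then come from smoothness of $C$ on $\FF_e$. The delicate point is showing the pencil planes do fill out a surface of minimal degree and that $C$ is a divisor of the right numerical class; I would lean on the Riemann--Roch / Clifford bounds and the aG structure to control the dimensions.

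\emph{For (B)$\Leftrightarrow$(C)}, I would pass through apolarity. Given (B), the surface $S_{a_1,a_2}$ induces structure on the Artinian Gorenstein ring $A=S_C/\langle\eta_1,\eta_2\rangle$ from Theorem~\ref{thm:4s}, whose socle degree is $4$ when $s=2$. A general choice of $\eta_1,\eta_2$ corresponds to a general hyperplane-pair section meeting the scroll so that the quotient records the rational normal surface; the fibration structure of $S_{a_1,a_2}$ should translate, via the Apolarity Lemma~\ref{lem:apo}, into a presentation of $F_{\eta_1,\eta_2}$ as a sum of fourth powers of the special form characterising a Fermat quartic. Conversely, if $F_{\eta_1,\eta_2}$ is Fermat for general $\eta_1,\eta_2$, then the apolar points $\Gamma$ (the summands $\ell_i^4$) are forced into a configuration whose span recovers the scroll, giving (B). I would model the bookkeeping on the cubic case treated in \cite{DZ}, replacing the canonical ($s=1$) computations with the half-canonical ($s=2$) ones. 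The main technical care is to verify that ``general $\eta_1,\eta_2$'' suffices uniformly and that the Waring/apolar configuration is exactly the rational normal curve underlying the scroll, rather than some degeneration.
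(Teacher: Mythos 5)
Your route is genuinely different from the paper's, so let me first record what the paper actually does. The paper proves three implications: (B)$\Rightarrow$(A) by Proposition~\ref{prop:4gona}; (B)$\Rightarrow$(C) by applying the Apolarity Lemma~\ref{lem:apo} to $\Gamma:=S_{a_1,a_2}\cap V(\eta_1,\eta_2)$, a length-$(N-1)$ scheme of points in linearly general position (aCM-ness of the scroll and Proposition~\ref{prop:4}, which fixes the socle degree at $4$, are what make this work); and (C)$\Rightarrow$(B) by an explicit computation: for the Fermat quartic, $F^\perp=(\de_i\de_j,\ \de_i^4-\de_j^4)$, so the quadrics of $I(C)$ are exactly the $\binom{N-2}{2}$ lifts $Q_{i,j}=\de_i\de_j+\de_{N-1}L_{i,j}+\de_{N}M_{i,j}$, and the ideal they generate cuts out a surface whose general codimension-two linear section is the length-$(N-1)$ scheme $V(\de_i\de_j)$, hence a surface of minimal degree $N-1$, i.e.\ a rational normal surface containing $C$. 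In particular, the paper pivots entirely on (B) and (C) and never argues \emph{from} hypothesis (A); your plan instead makes (A)$\Rightarrow$(B) the centerpiece. If carried out, your cycle (A)$\Leftrightarrow$(B), (B)$\Leftrightarrow$(C) would in fact be logically tidier than the printed proof, which contains no implication with $4$-gonality as the hypothesis.

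However, the two arrows where you place all the content are missing their key mechanisms, and these are genuine gaps. For (A)$\Rightarrow$(B) you say the divisors of the $g^1_4$ span planes sweeping out ``a threefold scroll a priori'' and hope that dimension constraints force a surface. They do not: for canonical ($s=1$) $4$-gonal curves the divisors really do span planes and the union really is a threefold. What you need is that every divisor $D$ of the pencil is \emph{collinear}, and this is exactly where the half-canonical hypothesis acts: the base-point-free pencil trick gives $h^0(\cL+D)\ge 2h^0(\cL)-h^0(\cL-D)$, while Riemann--Roch plus Serre duality with $\omega_C\cong\cL^{\otimes 2}$ gives $h^0(\cL-D)=h^0(\cL+D)-4$; combining, $h^0(\cL-D)\ge N-1$, hence $\dim\gen{D}\le 1$. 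Only then can you form the ruled surface swept by these lines and use projective normality to identify it as a rational normal surface. For (C)$\Rightarrow$(B), your statement that ``the apolar points are forced into a configuration whose span recovers the scroll'' is too vague to check and points at the wrong object: the apolar scheme $\Gamma_{\eta_1,\eta_2}$ sits in a codimension-two linear space and varies with $(\eta_1,\eta_2)$; its span recovers nothing. The working mechanism (the paper's) is that the degree-two part of $F^\perp$ lifts to honest quadrics $Q_{i,j}\in I(C)$, and it is the ideal generated by \emph{those quadrics} that cuts out the minimal-degree surface, the Fermat shape guaranteeing that their general codimension-two section is zero-dimensional of length $N-1$. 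Both arrows need to be redone with these mechanisms in hand before your outline becomes a proof.
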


\begin{proof} 
From Proposition~\ref{prop:4gona} we deduce that if $C\subset S_{a_1,a_2}$ and it is PN, then $C$ is $4$-gonal and $a_{2} \leq a_{1}\leq 3a_{2}+2$. 
If we take two general sections $\eta_1, \eta_2\in H^{0}(C,\cL)$, the zero-dimensional scheme of length $a_1+a_2$, 
$\Gamma:=S_{a_1,a_2}\cap V(\eta_1,\eta_2)\subset\check\p^{a_1+a_2-1}$ is apolar to 
a Fermat quartic hypersurface, $F_{\eta_1,\eta_2}\in \mathbb C[x_{0},\dotsc, x_{a_{1}+a_{2}-1}]$ 
by the Apolarity Lemma~\ref{lem:apo}, since $S_{a_1,a_2}$ is aCM, 
and by Proposition~\ref{prop:4}.

Now, let us suppose that our half-canonical curve $C$ is such that for every
couple of general sections
$\eta_1, \eta_2\in H^{0}(C,\cL)$, there is a zero-dimensional scheme of length $a_1+a_2$, 
$\Gamma_{\eta_1,\eta_2}\subset \p^{a_1+a_2-1}:=V(\eta_1, \eta_2)$ with $I(\Gamma_{\eta_1,\eta_2})\subset I(C)$, or, 
in other words, by the Apolarity Lemma~\ref{lem:apo}, $\Gamma_{\eta_1,\eta_2}$ is apolar to a Fermat quartic 
$F_{\eta_1,\eta_2}\in \mathbb C[x_0,\dotsc,
x_{a_1+a_2-1}]$. 

First of all, we can assume that $\eta_1=\de_{N-1}$ and $\eta_2=\de_N$ and 
$F_{\de_{N-1},\de_N}:=x_0^4+\dotsb+x_{N-2}^4$. 
We only need to find 
$F_{\de_{N-1},\de_N}^\perp$. 
It is easy to see that 
\begin{equation}\label{eq:riducib}
F_{\de_{N-1},\de_N}^\perp=(\de_i\de_j, \de_i^4-\de_j^4),
\quad i,j\in\{0,\dotsc,N-2\},\quad i\neq j. 
\end{equation}
Then, the quadrics of $\II(C)$ are of the form 
\begin{equation}\label{eq:quadrij}
Q_{i,j}:=\de_i\de_j+\de_{N-1}L_{i,j}+\de_{N}M_{i,j}, 
\end{equation}
where $L_{i,j}$ and $M_{i,j}$ are linear forms on $\check\p^N$. 
Therefore, we have a set of $\binom{N-2}{2}$ quadrics in $\II(C)$, and $\II(C)$ is 
not generated by quadrics.

Now, if the ideal 
\begin{equation*}
I:=(Q_{i,j})_{0\le i < j\le N-2}
\end{equation*}
defines a surface $S=V(I)$, then we are done, since $\Gamma:=S\cap \check\p^{N-2}$---where 
$\check\p^{N-2}:=V(\de_{N-1},\de_N)$---is a zero-dimensional scheme of length $N-1$, in fact
\begin{equation*}
I(\Gamma)=(\de_i\de_j)_{0\le i < j\le N-2},
\end{equation*}
and therefore $S$ is a surface of minimal degree, hence a rational normal surface.

Now, we have that 
\begin{equation*}
I(C)=(Q_{i,j}, \de_i^4-\de_j^4+\de_{N-1} N_{i,j}+\de_NR_{i,j})_{0\le i < j\le N-2}
\end{equation*}
with $N_{i,j}$ and $R_{i,j}$ homogeneous cubic forms of $\check\p^N$, and by hypothesis, if $\eta_1$ and $\eta_2$ are 
two general linear forms, Equation~\eqref{eq:riducib} becomes 
\begin{equation*}
F_{\eta_{1},\eta_{2}}^\perp=(\ell_i\ell_j, \ell_i^4-\ell_j^4),
\quad i,j\in\{0,\dotsc,N-2\},\quad i\neq j. 
\end{equation*}
with $\ell_i$ linear forms, and 
\begin{equation*}
Q_{i,j}':=\ell_i\ell_j+\eta_1L_{i,j}'+\eta_2M_{i,j}', 
\end{equation*}
are the quadrics in $I(C)$; but the only way to obtain these quadrics is from linear combinations of the $Q_{i,j}$'s, so 
\begin{equation*}
I=(Q_{i,j}')_{0\le i < j\le N-2}. 
\end{equation*}
Now, $\Gamma':=S\cap \check\p^{N-2'}$---where 
$\check\p'^{N-2}:=V(\eta_1,\eta_2)$---is a zero-dimensional scheme of length $N-1$, in fact, again 
\begin{equation*}
I(\Gamma')=(\ell_i\ell_j)_{0\le i < j\le N-2},
\end{equation*}
and therefore $S$ is a surface of minimal degree, hence a rational normal surface, and the proof is complete.


\end{proof}

\begin{rmk}\label{rmk:inutil}
It is not difficult to find that the quadrics $Q_{i,j}$ of Equation~\eqref{eq:quadrij} can be written in a particular form. 
In fact, if we choose two linear forms one of 
which is---for example---$\eta_1=\de_i$, 
we deduce, in $F_{\eta_{1},\eta_{2}}^\perp$, 
\begin{equation}\label{eq:ride}
\de_{N-1}L_{i,j}+\de_{N}M_{i,j}=\ell_i m_j, 
\end{equation}
where $\ell_i$ and $m_j$ are linear forms in the appropriate $\check\p^{N-2}:=V(\{\eta_1=\eta_2=0\})$. 
By the generality of the two linear forms $\eta_{1},\eta_{2}$, it follows, by Bertini Theorem, if $N>3$, that the quadric 
$\de_{N-1}L_{i,j}+\de_{N}M_{i,j}$ is reducible, and Equation~\eqref{eq:quadrij} can be written as
\begin{equation*}
Q_{i,j}=\det
\begin{pmatrix}\de_i &L_i\\
M_j &\de_j
\end{pmatrix}, 
\end{equation*}
where $L_i$ and $M_j$ are linear forms in $\p^N$ that determine $\ell_i$ and $m_j$ in Equation~\eqref{eq:ride}. Actually, it is not hard to prove that, always 
by Equation~\eqref{eq:quadrij}, that $L_i$ (or $M_j$) is a linear form 
of the type
\begin{equation*}
L_j=a\de_{N-1}+b\de_N.
\end{equation*}
\end{rmk}






\subsubsection{$s$-subcanonical curves}
Let us make the natural extension to the $s$-subcanonical curves contained in rational normal surfaces; 
from what we know from the last subsection, not surprisingly, 
we will see that these are characterised to be 
\emph{$(s+2)$-gonal}. We start with the following

\begin{prop}\label{prop:4gonas}
If a curve $C\subset S_{a_1,a_2}$ (with $a_1\ge a_2$) is $s$-subcanonical, then it is $s+2$-gonal; 
more precisely, $C$ is linearly equivalent to $(s+2)C_0+((s+1)a_1-a_2+2) f$, and therefore $p_a(C)=\frac{s+1}{2}(s(a_1+a_2)+2)$ and 
$\deg(C)=(s+1)(a_1+a_2)+2$. $C$ is smooth if (and only if) $(s+1)a_2+2\ge a_1$.
\end{prop}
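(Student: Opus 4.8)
The plan is to mimic exactly the structure of the proof of Proposition~\ref{prop:4gona}, replacing the half-canonical condition ($s=2$) with the general $s$-subcanonical one. I would begin by writing the generic divisor class of a curve on the scroll as $C\sim aC_0+bf$, and then compute its canonical class by adjunction. Using $K_{S_{a_1,a_2}}\sim -2C_0+(-2-e)f$, this gives
\begin{equation*}
K_C=\bigl((a-2)C_0+(b-2-e)f\bigr)\bigm|_C.
\end{equation*}
The $s$-subcanonical condition with respect to the polarisation $\abs{C_0+a_1f}$ means that $K_C$ must equal $s(C_0+a_1f)\bigm|_C$, so by the injectivity of the restriction map on very ample divisors (Lemma~\ref{lem:pic}) I would reduce the subcanonicity to the linear-equivalence identity on the scroll itself:
\begin{equation*}
s\,C_0+s\,a_1f=(a-2)C_0+(b-2-e)f.
\end{equation*}

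Next I would solve this equation coefficient by coefficient. Comparing the $C_0$-coefficients forces $a=s+2$, and comparing the $f$-coefficients (remembering $e=a_1-a_2$) gives $b-2-e=s\,a_1$, hence $b=s\,a_1+2+(a_1-a_2)=(s+1)a_1-a_2+2$, which is the asserted class. From $a=s+2$ the gonality is immediate: the pencil $\abs{f}$ cuts out on $C$ a $g^1_d$ whose degree is the intersection number $C\cdot f=(aC_0+bf)\cdot f=a=s+2$, exhibiting $C$ as $(s+2)$-gonal. The arithmetic genus then follows from the adjunction formula $2p_a(C)-2=C\cdot(C+K_{S_{a_1,a_2}})$, and the degree from $\deg(C)=C\cdot(C_0+a_1f)$; both are routine intersection-number computations using $C_0^2=-e$, $f^2=0$, $C_0\cdot f=1$, which should reproduce $p_a(C)=\tfrac{s+1}{2}(s(a_1+a_2)+2)$ and $\deg(C)=(s+1)(a_1+a_2)+2$.

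For the smoothness statement, I would invoke the criterion for very ampleness/base-point-freeness of divisors on $\FF_e$ cited in the excerpt (\cite[Corollary V.2.18]{H}): a divisor $aC_0+bf$ with $a>0$ is very ample (equivalently, its general member is smooth and irreducible) precisely when $b>ae$. Substituting $a=s+2$ and $b=(s+1)a_1-a_2+2$ and $e=a_1-a_2$, the inequality $b\ge a\,e$ becomes $(s+1)a_1-a_2+2\ge (s+2)(a_1-a_2)$, which simplifies to $(s+1)a_2+2\ge a_1$, exactly the stated bound. I expect the only genuine subtlety to be bookkeeping the ``if and only if'' in the smoothness claim: I would need the cited criterion to characterise, and not merely imply, smoothness of the general member, and I would keep careful track of the boundary case where the inequality is an equality. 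Everything else is formal intersection theory on the Hirzebruch surface, so the main obstacle is purely computational rather than conceptual.
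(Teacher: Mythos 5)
Your proposal is correct and follows essentially the same route as the paper's proof: adjunction together with Lemma~\ref{lem:pic} to force $a=s+2$ and $b=(s+1)a_1-a_2+2$, intersection theory on $\FF_e$ for the genus and degree, and \cite[V.2.18]{H} for the smoothness bound. The only differences are cosmetic: you make explicit that the ruling $\abs{f}$ cuts out the $g^1_{s+2}$ (the paper leaves this implicit), and the boundary subtlety you flag is settled by part (b) of \cite[V.2.18]{H}, which characterises the existence of irreducible nonsingular members and allows $b=ae$ when $e>0$, so the non-strict inequality $(s+1)a_2+2\ge a_1$ is exactly right.
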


\begin{proof}
We proceed as in the proof of Proposition~\ref{prop:4gona}: 
$C$ determines a divisor 
$C\sim aC_0+bf$ and it is is $s$-subcanonical with respect to the linear
system $\abs{C_{0}+a_{1}f}$ if it holds that, by adjunction and Lemma~\ref{lem:pic} 
\begin{equation*}
sC_0+sa_1f=((a-2)C_0+(b-2-e)f);
\end{equation*}
that is
\begin{align*}
a&=s+2\\
b&=(s+1)a_1-a_2+2. 
\end{align*}
Since we want $C$ to be smooth, then $(s+1)a_2+2\ge a_1$. 
The (arithmetic) genus $p_a(C)$ of $C$ is given, by adjunction 
$p_a(C)=\frac{(s+1)(s(a_1+a_2)+2)}{2}$, and its degree is 
$\deg(C)=(s+1)(a_1+a_2)+2$. 
\end{proof}

Next, we show that the smooth curves of the preceding proposition are PN: 

\begin{prop}\label{miazias} 
Let $a_{1},a_{2}\in\mathbb N$ such that
$(s+1)a_2+2\ge a_1\ge a_2$. Let $C\in \FF_e$ be a
general member of the linear system $\abs{(s+2)C_0+((s+1)a_1-a_2+2) f}$ where
$e=a_{1}-a_{2}\geq 0$. 
Then the image $X$ of $C$ by the embedding $\phi_{\abs{C_{0}+a_{1}f}}\colon \FF_e
\rightarrow S_{ a_{1},a_{2}}\subset\p^{1+a_{1}+a_{2}}$ is PN.
\end{prop}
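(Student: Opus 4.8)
The plan is to mirror almost verbatim the proof of Proposition~\ref{miazia}, since the statement of Proposition~\ref{miazias} is its direct $s$-subcanonical analogue and only the numerical data of the divisor class have changed. As in that proof, I set $N:=1+a_1+a_2$, $S:=S_{a_1,a_2}\subset\p^N$, and $X:=\phi_{\abs{C_0+a_1f}}(C)$, and I reduce the claim $h^1(\II_X(j))=0$ for all $j\in\NN$ to a vanishing on $\FF_e$ itself. Exactly the same exact diagram of sheaves as in Diagram~\eqref{d:3} gives $\II_{X,S}(j)\cong\OO_S(j-X)$; since $S$ is a rational normal scroll it is PN, so $h^1(\II_S(j))=0$, and because $\phi_{\abs{C_0+a_1f}}$ is an embedding the whole problem collapses to showing
\begin{equation*}
h^1\bigl(\OO_{\FF_e}(j(C_0+a_1f)-C)\bigr)=0\quad\text{for all }j\in\NN.
\end{equation*}

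The next step is the key computation, and it is where the new numerical input enters. Using $C\in\abs{(s+2)C_0+((s+1)a_1-a_2+2)f}$ together with $K_{\FF_e}\sim -2C_0+(-2-e)f$, I compute the class $j(C_0+a_1f)-C$ and check that it equals
\begin{equation*}
K_{\FF_e}+(j-s)(C_0+a_1f).
\end{equation*}
Indeed the $C_0$-coefficient is $j-(s+2)=-2+(j-s)$ and, using $e=a_1-a_2$, the $f$-coefficient is $ja_1-((s+1)a_1-a_2+2)=(-2-e)+(j-s)a_1$, so the identity holds. This is the honest analogue of the identity $j(C_0+a_1f)-C\sim K_{\FF_e}+(j-2)(C_0+a_1f)$ that drove the half-canonical case; there the ampleness threshold was $j\ge 3$, whereas here, because the canonical twist absorbs $s$ copies of the ample class, the clean Kodaira range is $j\ge s+1$.

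With this identity in hand the cohomology is dispatched by the same case analysis, organised by the size of $j$. For $j\ge s+1$ the class is $K_{\FF_e}$ plus a positive multiple of the very ample class $C_0+a_1f$, so $h^1=0$ by the Kodaira Vanishing Theorem~\cite[III.7.15]{H}. For $j=s$ the class is exactly $K_{\FF_e}$ and $h^1(\OO_{\FF_e}(K_{\FF_e}))=0$ because $\FF_e$ is a rational (hence regular) surface. For $0\le j<s$ the divisor is $K_{\FF_e}-(s-j)(C_0+a_1f)$, and by Serre duality $h^1$ of this equals $h^1(\OO_{\FF_e}((s-j)(C_0+a_1f)))$; since $C_0+a_1f$ is very ample and $\FF_e$ is regular, this vanishes — one sees it by the same restriction argument as in Proposition~\ref{miazia}, writing the long exact sequence associated to a general smooth rational member $H\in\abs{C_0+a_1f}$ and using $h^1(\OO_{\FF_e})=0$ together with $h^1(H,\OO_H(\,\cdot\,))=0$ from Serre duality on the rational curve $H$, iterating if $s-j>1$.

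I expect no genuine obstacle: the argument is structurally identical to Proposition~\ref{miazia}, and the only real content is the bookkeeping that produces the shifted threshold $j\ge s+1$ in the Kodaira step and the correct intermediate range $0\le j<s$. The one point demanding a little care is that, for $s>2$, the intermediate range now contains several values of $j$ rather than the single value $j=1$ of the half-canonical case, so the vanishing $h^1(\OO_{\FF_e}(m(C_0+a_1f)))=0$ must be asserted for every $m=s-j\ge 1$ rather than only for $m=1$; this still follows from regularity of $\FF_e$ and the very ampleness guaranteed by the hypothesis $(s+1)a_2+2\ge a_1\ge a_2$, which ensures $C$ is smooth and $C_0+a_1f$ is an embedding, so no new geometric difficulty arises.
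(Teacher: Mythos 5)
Your proof is correct and follows essentially the same route as the paper's: the same ideal-sheaf diagram reduction to $h^1\bigl(\OO_{\FF_e}(j(C_0+a_1f)-C)\bigr)=0$, the same identity $j(C_0+a_1f)-C\sim K_{\FF_e}+(j-s)(C_0+a_1f)$, Kodaira vanishing for $j\ge s+1$, rationality of $\FF_e$ for $j=s$, and Serre duality plus restriction-sequence induction for smaller $j$. The only (harmless) variation is in the range $j\le s-2$: you iterate restriction to the rational hyperplane-section curve $H\in\abs{C_0+a_1f}$, while the paper invokes general members of $\abs{(i+2)(C_0+a_1f)}$ together with a genus/degree computation before its induction --- your version is, if anything, the cleaner bookkeeping of the same idea.
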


\begin{proof} We proceed as in the proof of Proposition~\ref{miazia}, with the same notations. 
From Diagram~\eqref{d:3} 
it is sufficient to show that $h^1(\OO_S(j-X))=0$ for all $j\in \mathbb N$, and then to show that 
$h^1(\OO_{\FF_{e}}(j(C_{0}+a_{1}f)-C))=0$ for all $j\in \mathbb N$. 

Now, we have 
\begin{equation*}
j(C_{0}+a_{1}f)-C\sim 
K_{\FF_{e}}+(j-s)(C_{0}+a_{1}f). 
\end{equation*}

If $j\geq s+1$, then
\begin{align*}
H^1(\OO_{\FF_{e}}(j(C_{0}+a_{1}f)-C))&=H^1(\OO_{\FF_{e}}(K_{\FF_{e}}+(j-s)(C_{0}+a_{1}f)))\\
&=\{0\}
\end{align*}
by the Kodaira Vanishing theorem \cite[III.7.15]{H}. 

If $j=s$, then 
$h^1(\OO_{\FF_{e}}(K_{\FF_{e}}))=0$ 
since $\FF_{e}$ is a rational surface. 


If $j=s-1$, 
by Serre duality 
\begin{equation*}
H^1(\OO_{\FF_{e}}(K_{\FF_{e}}-(C_{0}+a_{1}f)))\cong H^1(\OO_{\FF_{e}}(C_{0}+a_{1}f)))^{\vee},
\end{equation*}
so we only need to show that $h^1(\OO_{\FF_{e}}(C_{0}+a_{1}f))=0$.
Now, the general member $H\in \abs{C_{0}+a_{1}f}$ is a connected smooth
rational curve and its degree is $a_{1}+a_{2}$, since $H$ corresponds to a hyperplane section of $S$. 
Then $h^{1}(H, \OO_{H}(C_{0}+a_{1}f))=0$ by Serre duality on the curve $H$. 

By the cohomology of 
\begin{equation*}
0\to \OO_{\FF_{e}} \to \OO_{\FF_{e}}(C_{0}+a_{1}f) \to
\OO_{H}(C_{0}+a_{1}f) \to 0, 
\end{equation*}
since $h^1(\OO_{\FF_{e}})=0$ ($\FF_{e}$ is a rational, hence regular, surface) and $h^{1}(H, \OO_{H}(C_{0}+a_{1}f))=0$, 
it follows that $h^1\OO_{\FF_{e}}(C_{0}+a_{1}f)=0$, which is the claim.

If $j\le s-2$, we set $j=s-2-i$, with $0\le i\le s-2$. 
Again by Serre duality we have to show that 
$h^1(\OO_{\FF_{e}}((i+2)(C_{0}+a_{1}f)))=0$. 

As above the general member $H\in \abs{(i+2)(C_{0}+a_{1}f)}$ is a connected smooth
curve of degree $(i+2)(a_{1}+a_{2})$. Its genus $g$ is given by adjunction:
\begin{align*}
2g-2&=(i+2)(C_{0}+a_{1}f)((i+2)(C_{0}+a_{1}f)+K_{\FF_{e}})\\
&=(i+2)(i+1)(a_1+a_2)-2(i+2),
\end{align*}
which means
\begin{equation*}
g=\binom{i+2}{2}\deg(S)-(i+1).
\end{equation*}

Then, since $(i+2)^2(\deg(S))^2> 2g-2$, we deduce $h^{1}(H, \OO_{H}((i+2)(C_{0}+a_{1}f))=0$.

Finally, considering the cohomology of 
\begin{equation*}
0\to \OO_{\FF_{e}}((i+1)C_{0}+a_{1}f) \to \OO_{\FF_{e}}((i+2)(C_{0}+a_{1}f)) \to
\OO_{H}((i+2)(C_{0}+a_{1}f)) \to 0, 
\end{equation*}
since we have just proved that $h^1(\OO_{\FF_{e}}(C_{0}+a_{1}f))=0$ and $h^{1}(H, \OO_{H}((i+2)(C_{0}+a_{1}f))=0$, 
$\forall i$, $0\le i\le s-2$, we deduce, by induction on $i$, that 
$h^1(\OO_{\FF_{e}}((i+2)(C_{0}+a_{1}f)))=0$, $\forall i$, $0\le i\le s-2$, and therefore the proposition is proved. 
\end{proof}


In the hypothesis of Proposition \ref{miazias}, we can give the converse of Proposition~\ref{prop:4gonas}, and therefore 
we can characterise the PN $(s+2)$-gonal half-canonical curves, in the following 

\begin{thm}\label{miofratellos} 
Let $(C,\cL)$ be a polarised curve, such that $C\subset\abs{\cL}^\vee=:\check\p^N$ is a $s$-subcanonical PN curve. 
Then $C$ is $(s+2)$-gonal 
if and only if it is contained in a rational normal surface $S_{a_1,a_2}$ with $a_{2} \leq a_{1}\leq (s+1)a_{2}+2$ and $N:=a_1+a_2+1$, if and only if 
for every
couple of general sections
$\eta_1, \eta_2\in H^{0}(C,\cL)$, $F_{\eta_{1},\eta_{2}}\in \mathbb C[x_{0},\dotsc, x_{a_{1}+a_{2}-1}]$ 
is a Fermat $(s+2)$-tic. 
\end{thm}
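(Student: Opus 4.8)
The plan is to follow the scheme of Theorem~\ref{miofratello} (the case $s=2$) and to reduce the whole equivalence to the quadratic part of the apolar ideal. Denote by (A), (B), (C) the three conditions --- that $C$ is $(s+2)$-gonal, that $C$ lies on a rational normal surface $S_{a_1,a_2}$ with $a_2\le a_1\le(s+1)a_2+2$ and $N=a_1+a_2+1$, and that $F_{\eta_1,\eta_2}$ is a Fermat $(s+2)$-tic for general $\eta_1,\eta_2$. First I would dispatch (B)$\Rightarrow$(A): this is immediate from Proposition~\ref{prop:4gonas}, since there $C\sim(s+2)C_0+((s+1)a_1-a_2+2)f$ and the ruling $|f|$ cuts out a $g^1_{s+2}$, while smoothness forces the bounds on $a_1,a_2$.

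Next I would close the round trip (B)$\Leftrightarrow$(C) by apolarity. For (B)$\Rightarrow$(C), take general $\eta_1,\eta_2\in H^0(C,\cL)$ and put $\Gamma:=S_{a_1,a_2}\cap V(\eta_1,\eta_2)$; since $S_{a_1,a_2}$ is aCM of minimal degree $N-1$, the general codimension-two linear section $\Gamma$ is a reduced scheme of length $N-1$ spanning $\check\p^{N-2}=V(\eta_1,\eta_2)$. From $C\subset S_{a_1,a_2}$ we get $I(S_{a_1,a_2})\subset I(C)$, and reducing modulo $\gen{\eta_1,\eta_2}$ yields $I(\Gamma)\subset F_{\eta_1,\eta_2}^\perp$; by the Apolarity Lemma~\ref{lem:apo} the form is then a sum of $N-1$ powers of independent linear forms, and its degree is $s+2$ by Theorem~\ref{thm:4s}, so it is a Fermat $(s+2)$-tic. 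For (C)$\Rightarrow$(B) I would normalise $\eta_1=\de_{N-1}$, $\eta_2=\de_N$, $F=x_0^{s+2}+\dots+x_{N-2}^{s+2}$. The decisive --- and $s$-uniform --- observation is that $\de_i\de_j\cdot F=0$ for $i\ne j$ whereas $\de_i^2\cdot F\ne0$, so the degree-two part of $F^\perp=(\de_i\de_j,\de_i^{s+2}-\de_j^{s+2})_{i\ne j}$ is exactly $\gen{\de_i\de_j:i\ne j}$, literally as in the quartic case. Hence the quadrics of $I(C)$ have the shape $\de_i\de_j+\de_{N-1}L_{i,j}+\de_N M_{i,j}$, they generate an ideal defining a scheme $S$ whose section $S\cap V(\de_{N-1},\de_N)$ has ideal $(\de_i\de_j)_{i<j}$, i.e. the $N-1$ coordinate points of $\check\p^{N-2}$; so $S$ is a nondegenerate surface of degree $N-1$, a rational normal surface $S_{a_1,a_2}\supset C$, with bounds from Proposition~\ref{prop:4gonas}, and the genericity argument of Theorem~\ref{miofratello} shows $S$ is intrinsic to $C$.

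The genuinely hard direction is (A)$\Rightarrow$(B), the generalised Enriques--Petri step, and here the case $\cL\ne\omega_C$ differs essentially from the canonical case of \cite{DZ}, because geometric Riemann--Roch no longer forces the fibres of the gonal pencil to be collinear. The plan is to take the complete gonal pencil $|D|$ with $\deg D=s+2$ and $h^0(\OO_C(D))=2$, and to prove that in the $s$-subcanonical embedding each fibre $D_t$ imposes only two conditions on $|\cL|$, that is $h^0(\cL(-D_t))=h^0(\cL)-2$ and $\dim\langle D_t\rangle=1$; then $\bigcup_t\langle D_t\rangle$ is a nondegenerate surface of minimal degree containing $C$, which Propositions~\ref{prop:4gonas} and~\ref{miazias} identify as $S_{a_1,a_2}$ with the asserted bounds. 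Equivalently, in the language of Theorem~\ref{thm:4s}, this amounts to showing that $A=S_C/\gen{\eta_1,\eta_2}$ attains the extremal symmetric Hilbert function $(1,N-1,\dots,N-1,1)$, which by the computation above is exactly the Fermat condition and gives (A)$\Rightarrow$(C). I expect the crux to be precisely this collinearity/extremality statement --- turning the mere existence of a $g^1_{s+2}$ into the vanishing $h^0(\cL(-D_t))=h^0(\cL)-2$, equivalently $\dim A_2=N-1$. I would attack it with the Base-Point-Free Pencil trick applied to $|D|$ together with Clifford's theorem, controlling the quadrics through $C$ as in the canonical argument of \cite{DZ}; the subtlety absent for $s=1$ is that one must exploit the full multiplicative structure relating $\cL$ and $\omega_C=\cL^{\otimes s}$ rather than a single application of geometric Riemann--Roch.
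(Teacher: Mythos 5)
Your three completed implications reproduce the paper's proof step for step: (B)$\Rightarrow$(A) is Proposition~\ref{prop:4gonas}; (B)$\Rightarrow$(C) is the apolarity of $\Gamma=S_{a_1,a_2}\cap V(\eta_1,\eta_2)$ via the aCM property of the scroll, the Apolarity Lemma~\ref{lem:apo} and the socle degree supplied by Theorem~\ref{thm:4s}; (C)$\Rightarrow$(B) is the normalisation $F=x_0^{s+2}+\dotsb+x_{N-2}^{s+2}$, the computation $F^\perp=(\de_i\de_j,\ \de_i^{s+2}-\de_j^{s+2})$, and the observation that the quadrics $Q_{i,j}=\de_i\de_j+\de_{N-1}L_{i,j}+\de_N M_{i,j}$ of $\II(C)$ cut out a surface of minimal degree containing $C$. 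On this part there is nothing to correct: it is the same argument, in the same order, with the same tools.

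The issue is the direction you yourself single out as the hard one. Your treatment of (A)$\Rightarrow$(B) is a plan, not a proof: the collinearity statement $h^0(\cL(-D_t))=h^0(\cL)-2$ for the fibres of the gonal pencil is exactly what needs to be established, and announcing that you would attack it with the Base-Point-Free Pencil trick and Clifford's theorem does not yet do it; so, as a proof of the stated three-way equivalence, your attempt is incomplete. You should know, however, that the paper's own proof has precisely the same omission: it establishes (B)$\Rightarrow$(A), (B)$\Rightarrow$(C) and (C)$\Rightarrow$(B), and at no point uses the hypothesis that $C$ is $(s+2)$-gonal to deduce either of the other two conditions, so the cycle of implications is never closed there either. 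Your identification of the generalised Enriques--Petri step as the real content of the theorem is therefore accurate, and more candid than the text. One mathematical correction to your sketch, though: it is not true that attaining the extremal Hilbert function $(1,N-1,\dotsc,N-1,1)$ ``is exactly the Fermat condition''. That Hilbert function is necessary but not sufficient: the form $x_0^{s+1}x_1+x_2^{s+2}+\dotsb+x_{N-2}^{s+2}$ has apolar Artinian Gorenstein algebra with the same Hilbert function, yet it is not projectively equivalent to a Fermat $(s+2)$-tic, since the quadrics of its perpendicular ideal cut out a \emph{non-reduced} scheme of length $N-1$. The Fermat condition requires, beyond the Hilbert function, that $(F^\perp)_2$ cut out $N-1$ reduced points spanning $\check\p^{N-2}$; any completion of (A)$\Rightarrow$(C) must produce this reducedness, presumably from the base-point-freeness and genericity of the gonal pencil, and that is where the remaining work lies.
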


\begin{proof} 
From Proposition~\ref{prop:4gonas} we deduce that if $C\subset S_{a_1,a_2}$ and it is PN, then $C$ is $(s+2)$-gonal and $a_{2} \leq a_{1}\leq (s+1)a_{2}+2$. 
If we take two general sections $\eta_1, \eta_2\in H^{0}(C,\cL)$, the zero-dimensional scheme of length $a_1+a_2$, 
$\Gamma:=S_{a_1,a_2}\cap V(\eta_1,\eta_2)\subset\check\p^{a_1+a_2-1}$ is apolar to 
a Fermat $(s+2)$-tic hypersurface, $F_{\eta_1,\eta_2}\in \mathbb C[x_{0},\dotsc, x_{a_{1}+a_{2}-1}]$ 
by the Apolarity Lemma~\ref{lem:apo}, since $S_{a_1,a_2}$ is aCM, 
and by Theorem~\ref{thm:4s}.

Now, let us suppose that our $s$-subcanonical curve $C$ is such that for every
couple of general sections
$\eta_1, \eta_2\in H^{0}(C,\cL)$, there is a zero-dimensional scheme of length $a_1+a_2$, 
$\Gamma_{\eta_1,\eta_2}\subset \p^{a_1+a_2-1}:=V(\eta_1, \eta_2)$ with $I(\Gamma_{\eta_1,\eta_2})\subset I(C)$, or, 
in other words, by the Apolarity Lemma~\ref{lem:apo}, $\Gamma_{\eta_1,\eta_2}$ is apolar to a Fermat $(s+2)$-tic 
$F_{\eta_1,\eta_2}\in \mathbb C[x_0,\dotsc,
x_{a_1+a_2-1}]$. 

First of all, we can assume that $\eta_1=\de_{N-1}$ and $\eta_2=\de_N$ and 
$F_{\de_{N-1},\de_N}:=x_0^4+\dotsb+x_{N-2}^4$. 
We only need to find 
$F_{\de_{N-1},\de_N}^\perp$. 
It is easy to see that 
\begin{equation}\label{eq:riducibs}
F_{\de_{N-1},\de_N}^\perp=(\de_i\de_j, \de_i^{s+2}-\de_j^{s+2}),
\quad i,j\in\{0,\dotsc,N-2\},\quad i\neq j. 
\end{equation}
Then, the quadrics of $\II(C)$ are of the form 
\begin{equation}\label{eq:quadrijs}
Q_{i,j}:=\de_i\de_j+\de_{N-1}L_{i,j}+\de_{N}M_{i,j}, 
\end{equation}
where $L_{i,j}$ and $M_{i,j}$ are linear forms on $\check\p^N$. 
Therefore, we have a set of $\binom{N-2}{2}$ quadrics in $\II(C)$, and $\II(C)$ is 
not generated by quadrics.

Now, if the ideal 
\begin{equation*}
I:=(Q_{i,j})_{0\le i < j\le N-2}
\end{equation*}
defines a surface $S=V(I)$, then we are done, since $\Gamma:=S\cap \check\p^{N-2}$---where 
$\check\p^{N-2}:=V(\de_{N-1},\de_N)$---is a zero-dimensional scheme of length $N-1$, in fact
\begin{equation*}
I(\Gamma)=(\de_i\de_j)_{0\le i < j\le N-2},
\end{equation*}
and therefore $S$ is a surface of minimal degree, hence a rational normal surface.

Now, we have that 
\begin{equation*}
I(C)=(Q_{i,j}, \de_i^{s+2}-\de_j^{s+2}+\de_{N-1} N_{i,j}+\de_NR_{i,j})_{0\le i < j\le N-2}
\end{equation*}
with $N_{i,j}$ and $R_{i,j}$ homogeneous $s-1$-forms of $\check\p^N$, and by hypothesis, if $\eta_1$ and $\eta_2$ are 
two general linear forms, Equation~\eqref{eq:riducibs} becomes 
\begin{equation*}
F_{\eta_{1},\eta_{2}}^\perp=(\ell_i\ell_j, \ell_i^{s+2}-\ell_j^{s+2}),
\quad i,j\in\{0,\dotsc,N-2\},\quad i\neq j. 
\end{equation*}
with $\ell_i$ linear forms, and 
\begin{equation*}
Q_{i,j}':=\ell_i\ell_j+\eta_1L_{i,j}'+\eta_2M_{i,j}', 
\end{equation*}
are the quadrics in $I(C)$; but the only way to obtain these quadrics is from linear combinations of the $Q_{i,j}$'s, so 
\begin{equation*}
I=(Q_{i,j}')_{0\le i < j\le N-2}. 
\end{equation*}
Now, $\Gamma':=S\cap \check\p^{N-2'}$---where 
$\check\p'^{N-2}:=V(\eta_1,\eta_2)$---is a zero-dimensional scheme of length $N-1$, in fact, again 
\begin{equation*}
I(\Gamma')=(\ell_i\ell_j)_{0\le i < j\le N-2},
\end{equation*}
and therefore $S$ is a surface of minimal degree, hence a rational normal surface, and the proof is complete.


\end{proof}

\begin{rmk}
Clearly the observations done in Remark~\ref{rmk:inutil} on the quadrics of the 
ideal of the half-canonical curves hold in the
general case of the quadrics of the 
ideal of the $s$-subcanonical curves. 
\end{rmk}

\providecommand{\bysame}{\leavevmode\hbox to3em{\hrulefill}\thinspace}
\providecommand{\MR}{\relax\ifhmode\unskip\space\fi MR }
\providecommand{\MRhref}[2]{%
\href{http://www.ams.org/mathscinet-getitem?mr=#1}{#2}
}
\providecommand{\href}[2]{#2}

\end{document}